\pgfplotsset{compat=1.18}
\tikzset{
  dend/.style={line width=0.6pt},
  leaf/.style={circle, fill=black, inner sep=0pt, minimum size=2.2pt},
}
\theoremstyle{plain}
\newtheorem{remark}{Remark}
\newtheorem{theorem}{Theorem}[section]
\newtheorem{lemma}[theorem]{Lemma}
\newtheorem{proposition}[theorem]{Proposition}
\newtheorem{corollary}[theorem]{Corollary}
\theoremstyle{remark}
\newtheorem{definition}[theorem]{Definition}
\newtheorem{example}{Example}
\apptocmd{\thebibliography}{%
\setlength{\itemsep}{0pt}%
\setlength{\parskip}{0pt}%
}{}{}
\begin{document}

\begin{frontmatter}



\title{Hierarchical Clustering Algorithms on Poisson\\ and Other Stationary Point Processes}


\author[]{Sayeh Khaniha\corref{cor1}}

\author[]{François Baccelli}



\begin{abstract}
This paper introduces a hierarchical clustering algorithm, the Clustroid Hierarchical Nearest Neighbor (\(\mathrm{CHN}^2\)),
designed for datasets with a countably infinite number of data represented by points in the Euclidean space. The method builds clusters across successive levels
by linking nearest-neighbor points or clusters using the clustroid distance. The properties of this algorithm make
it suitable for very large datasets.

To evaluate its properties, we first apply the algorithm to the homogeneous Poisson point process,
which serves as a natural null-hypothesis model with no intrinsic data aggregation. In this setting, the 
algorithm generates a random forest that is a deterministic factor of the Poisson point process,
and hence unimodular. We prove that at every level, the level-\(k\) graph has only finite connected
components (a.s.) and derive bounds on their mean size. We also establish the existence of a limiting
graph as the number of levels tends to infinity. In this limit, clusters are shown to be all infinite and one-ended,
which induces a natural order within each component and supports a tree-like phylogenetic interpretation.

Beyond the Poisson case, we extend the analysis to a class of Cox and more general stationary point
processes without second-order descending chains (introduced here), for which analogous results hold. Simulations
show that comparing the Cox case with the Poisson baseline allows an efficient detection of aggregation,
thereby linking the stochastic-geometric analysis to practical clustering tasks.
\end{abstract}



\begin{keyword}
Hierarchical Clustering \sep 
Clustering on Poisson Point Process \sep 
Cox Point Process \sep 
Random Trees \sep 
Unimodular Random Graphs
\end{keyword}

\end{frontmatter}


\section{Introduction}\label{INTRO}

Clustering is a fundamental tool in unsupervised learning whose objective is to identify groups of similar objects within a dataset. Among clustering methods, hierarchical clustering occupies a special place because it reveals the organization of the data at multiple scales, producing a nested sequence of clusters rather than a single partition. Classical hierarchical clustering algorithms have been extensively studied and successfully applied in a wide range of domains; see \cite{Murtagh,Murtagh2} and the references therein. However, most existing approaches are designed for finite datasets and are typically analyzed in a deterministic setting.

In many applications, data are naturally represented as realizations of spatial point processes rather than finite point clouds. Examples arise in telecommunications, biology, astronomy, and materials science, where the number of observations can be extremely large, and the underlying data-generating mechanism is inherently random. This motivates the study of hierarchical clustering procedures that can be defined directly on random point configurations and whose structural properties can be analyzed within a probabilistic framework. In this paper, we introduce a hierarchical clustering algorithm on stationary point processes and investigate its geometric and probabilistic properties, using the homogeneous Poisson point process (PPP) as a baseline model.

\paragraph{The model} We now describe the algorithm informally; the precise construction is given in Subsection~\ref{0011C}. Let $\phi$ be a locally finite point configuration in $\mathbb{R}^d$. At level $0$, each point of $\phi$ is connected to its nearest neighbor. The connected components of the resulting nearest-neighbor graph are called \textit{clusters of order $0$}. Whenever such a cluster is finite, and distances are in general position (no distance ties), it contains a unique mutual nearest-neighbor pair. This pair is used as the representative, or \emph{clustroid pair}, of the cluster.

Higher levels are constructed recursively. Suppose that the clusters of order $k$ are finite and each has a well-defined clustroid pair. The distance between two clusters is defined through their clustroid pairs using the single-linkage distance introduced in Subection~\ref{0011C}. Each cluster is then connected to its nearest neighboring cluster with respect to this distance, and the connected components of the resulting graph are called clusters of order $k+1$. Assuming that the clusters of order \(k+1\) are finite, each such cluster contains a unique mutual nearest-neighbor pair. These pairs define the clustroid pairs of order (k+1), and the procedure is iterated. We call this algorithm the \emph{Clustroid Hierarchical Nearest Neighbor} algorithm, denoted by $\mathrm{CHN}^2$, where the superscript “$2$” is shorthand for “NN”.

A simple phylogeny picture is useful here. One can view clustroid pairs as archetypes of their components.
Linking archetypes across levels organizes the data into a tree-like summary.

The clustering algorithm scales well to large data. Once a component forms, only its two clustroid points
are needed to decide future links. There is no extra optimization step to \emph{find} representatives since
the clustroid pair arises automatically as an MNN pair inside the component. This autonomy reduces computation
and memory, since decisions are made on representatives rather than on all points, and it also enables
parallelization: widely separated regions can be processed independently and hence in parallel.



To study properties at the scale of very large or infinite datasets, we place the algorithm in a point-process framework. As a baseline, we consider the homogeneous Poisson point process
\begin{equation}
\label{PPPdefinition}
\Phi^0=\sum_i\delta_{x_i}
\end{equation}
in \(\mathbb R^d\), which has no intrinsic aggregation (see Remark \ref{aggragates}) and therefore serves as a natural null model. The construction is scale invariant, so there is no loss of generality in assuming unit intensity. A key observation is that each level $k$ of the hierarchy is a random directed graph that can be represented by a point-shift $f_{k}$ on the underlying point process. A \emph{point-shift} $F$ maps, in a translation invariant way, each point of a stationary point process $\phi$ to some point of $\phi$ (see \cite{Mecke}, \cite{Tho00}, and \cite{Pointshift} for more details and Section \ref{Ch:apter7}).  Consequently, the level graphs are \emph{unimodular} (so the mass-transport arguments apply; see \cite{Aldous} for more details) under the Palm distribution, allowing us to use tools from the theory of unimodular random networks. 

\paragraph{Questions} The recursive nature of the construction raises several questions. Are the finite-level point-shift graphs $(f_k)$ well defined, i.e., are all clusters almost surely finite at every level? Can the hierarchy be iterated indefinitely, and does the sequence $(f_k)$ converge to a limiting graph? If such a limit exists, what are its structural properties, such as the number of connected components and the number of ends of its infinite clusters? More generally, to what extent do these properties extend beyond the Poisson setting, and can the resulting hierarchy be used to detect aggregation in spatial data?

\paragraph{Main Results} Theorem \ref{FFPROPERTY} states that, at every level, all clusters are almost surely finite; we derive bounds on their mean sizes. This property is proven in Section \ref{ProofFF}. The main new methodological tool is the notion \emph{second-order descending chain} introduced here, see Definition \ref{017}. We show that the PPP admits almost surely no infinite second-order descending chain. This is instrumental in proving the finiteness of the level-\(k\) clusters and yields intensity bounds for the level-\(k\) clustroids. 

Theorem~\ref{Existence of the limit point-shift} establishes the existence of a limiting graph as the number of levels tends to infinity and shows that this limiting graph is unimodular. Unimodularity is important because it turns global properties into local expectations and gives structural leverage: for instance, in a unimodular random graph, the Foil Classification Theorem (see page 10 in \cite{Baccelli2017}) classifies the size and the number of ends in the clusters. It is shown that the mean size of the \emph{typical} cluster in this limiting graph is infinite, and Corollary~\ref{IIconnected} shows that all infinite clusters of the limiting graph are \emph{one-ended.} This property provides a natural genealogical structure within each cluster: any two vertices eventually merge into a common ancestral branch, and the level at which this merger occurs may be interpreted as a hierarchical distance between them. In contrast, multi-ended clusters admit several competing directions to infinity and do not support such a canonical ordering.

It is worth emphasizing that these finiteness and structural results are far from trivial in the infinite setting. 
In finite datasets, clusters produced by hierarchical algorithms are necessarily finite, but when the data form a stationary point process, the situation is subtler. 
Even the nearest-neighbor graph (the first level of the hierarchy) may contain infinite components; this occurs, for example, in certain non-Poisson stationary processes that admit infinite descending chains (see \cite{Daley}). 
The proof that the homogeneous Poisson process yields only finite clusters thus requires specific geometric arguments and plays a key role in establishing the well-definedness of the hierarchy at all levels. 
Likewise, in the limiting infinite graph, the fact that all connected components are \emph{one-ended} is not automatic: graphs with multiple ends can exhibit branching structures where no consistent order can be defined between distant points. The one-endedness property, therefore, ensures that, even in the infinite-data regime, each cluster admits a natural hierarchical order—providing the basis for its phylogenetic interpretation.

The analysis extends beyond the Poisson setting. Theorem \ref{thm:transfer-no2dc} shows that the arguments developed in the PPP case rely primarily on stationarity and the absence of infinite second-order descending chains. In Section~\ref{CoxSection}, we introduce a class of Cox point processes satisfying these properties and show that the finiteness results established for the PPP continue to hold. Simulations indicate that, relative to the Poisson baseline, $(\mathrm{CHN}^2)$ effectively detects aggregation and distinguishes structured organization from purely stochastic dispersion.

The paper is organized as follows. Section~\ref{Ch:apter7} introduces the $\mathrm{CHN}^2$ algorithm on the homogeneous Poisson point process and describes its level-by-level construction through a sequence of point-shifts. It also establishes the scale invariance of the construction and introduces the limiting graph, referred to as the $\mathrm{CHN}^2$ Eternal Family Forest (EFF). Section~\ref{CoxSection} extends the framework to Cox and other stationary point processes and illustrates its use for detecting aggregation through numerical simulations. Section~\ref{ProofFF} contains the proofs of the finiteness of all finite-level clusters, the absence of infinite second-order descending chains, and the main structural results for the hierarchy. Finally, Section~\ref{OpenProblems} discusses open problems and conjectures concerning the connectivity and ergodic properties of the limiting graph.

\section{\texorpdfstring{$\mathrm{CHN}^2$}{CHN2} on Point Processes}
\label{Ch:apter7}
This section studies the $\mathrm{CHN}^2$ algorithm on homogeneous PPP,  \( \Phi^0 \). As explained in the introduction, each level of the hierarchy
can be represented as a point-shift and therefore under the Palm probability is a unimodular graph.
We briefly recall the required notions.
\paragraph{Point-shifts and Unimodularity}
A point-shift $F$ on a stationary point process $\phi$ is a translation invariant map  from the set of atoms of the point process to itself.
Given a point-shift \(F\) on \(\phi\), the associated \(F\)-graph is the Euclidean graph whose vertex set is the set of atoms of \(\phi\) and whose edge set is
$\{(x,F(x)):x\in\phi\}$.
For more details see \cite{haji_mirsadeghi_baccelli2018, Pointshift}.  A random rooted graph \([G,o]\) is said to be unimodular if it satisfies the Mass Transport Principle (MTP); see \cite{Aldous}. Under the Palm probability of $\phi$, the $F$-graph of any point-shift is unimodular, see \cite{Baccelli2017}.

\subsection{Construction of the Pre-limit Point-Shifts}
\label{0011C}
We construct a sequence \( f_n \), \( n \ge 0 \), of point-shifts on \( \Phi^0 \) by induction. 

\paragraph{Order 0}
The first point-shift, \( f_0 \), is the nearest-neighbor point-shift on \( \Phi^0 \) 
with respect to the Euclidean distance between points.
That is, for all \(x \in \Phi^0\), we define \( f_0(x) = \mathrm{NN0}(x) \), where 
\(\mathrm{NN0}\) maps a point to its nearest neighbor in \( \Phi^0 \).

It is proved in Section~\ref{ProofFF} that all components of the $(f_0)$-graph are finite. Since each vertex has out-degree one, every connected component contains a unique directed cycle. In the nearest-neighbor graph this cycle is necessarily a mutual nearest-neighbor pair and therefore has length two.

The construction for levels $(k\ge1)$ is recursive. Assume that the point-shift $(f_k)$ has been constructed. By Proposition~\ref{008}, every connected component of the $(f_k)$-graph is almost surely finite and contains a unique cycle of length two. We now introduce the notation associated with these components before defining the next point-shift $(f_{k+1})$.

\begin{definition}
Each connected component of the \( f_k \)-graph on the PPP has a cycle of length two.
These cycles are called \textbf{\(k\)-cycles}. The connected components of the \( f_k \)-graph
are also referred to as \textbf{clusters of order $k$}. The vertices in the \( f_k \)-graph that belong to
\(k\)-cycles are called \textbf{cluster heads of order $k$}.
\end{definition}
\begin{definition}
The union of cluster heads of order $k$ forms a sub-PP of \(\Phi^0\) denoted by \(\Phi^k_c\). Let \(\Phi^k_a = \Phi^0 \setminus \Phi^k_c\). 
If the edges of the \(k\)-cycles in the \( f_k \)-graph are deleted,
one obtains a collection of directed trees referred to as the \textbf{cluster subtrees of order k}.
Each vertex \( x \) in a cluster subtree of order k is either a cluster head of order k
(i.e., \( x \in \Phi^k_c \)) or is such that \( x \in \Phi^k_a \), and there is a path from \( x \)
to a cluster head of order $k$. See Figure \ref{Fi0} and \ref{Fi1} . 
\end{definition}
\begin{figure}
\centering
	\includegraphics[width=6cm]{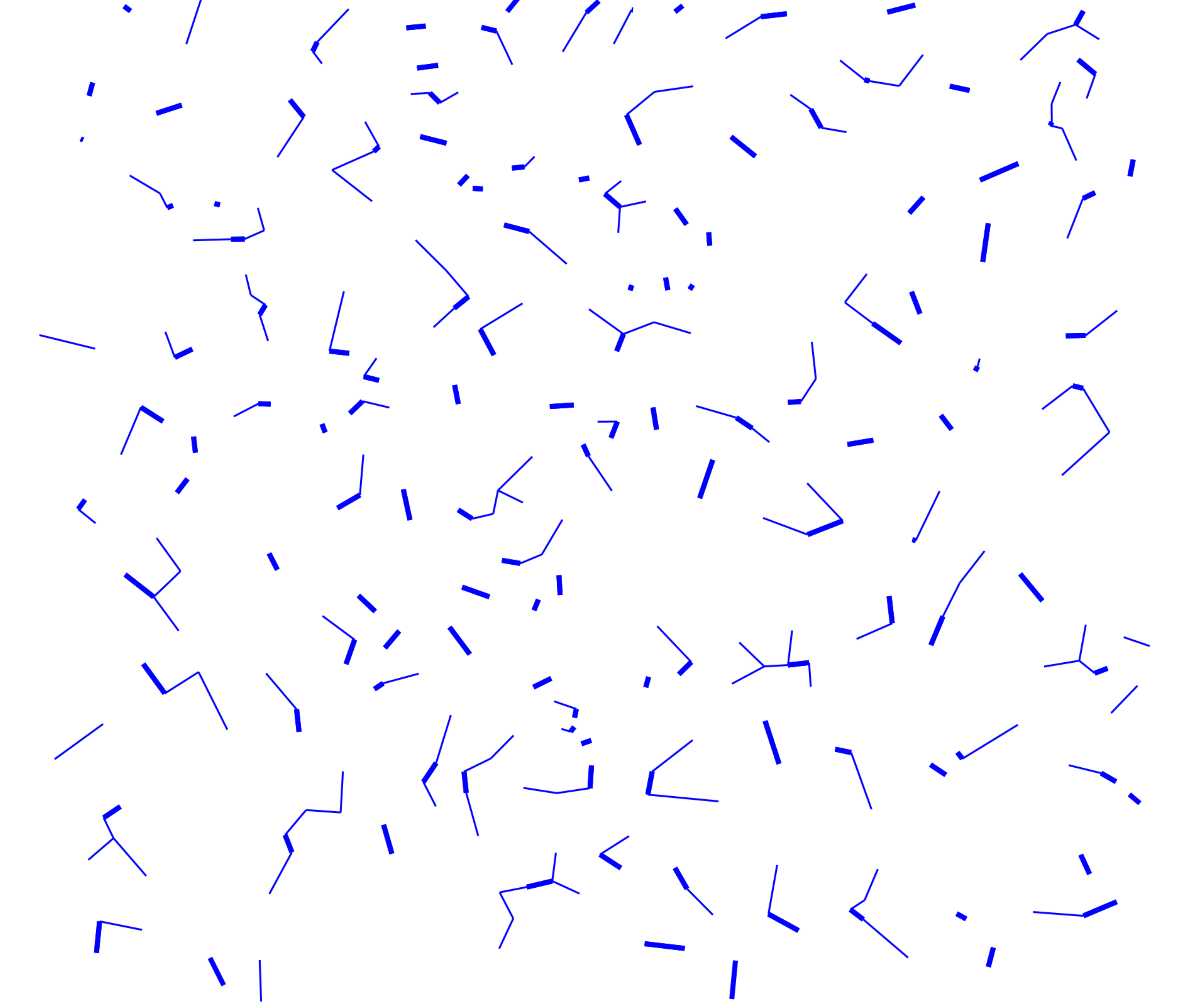} \hspace{1cm}
\includegraphics[width=6cm]{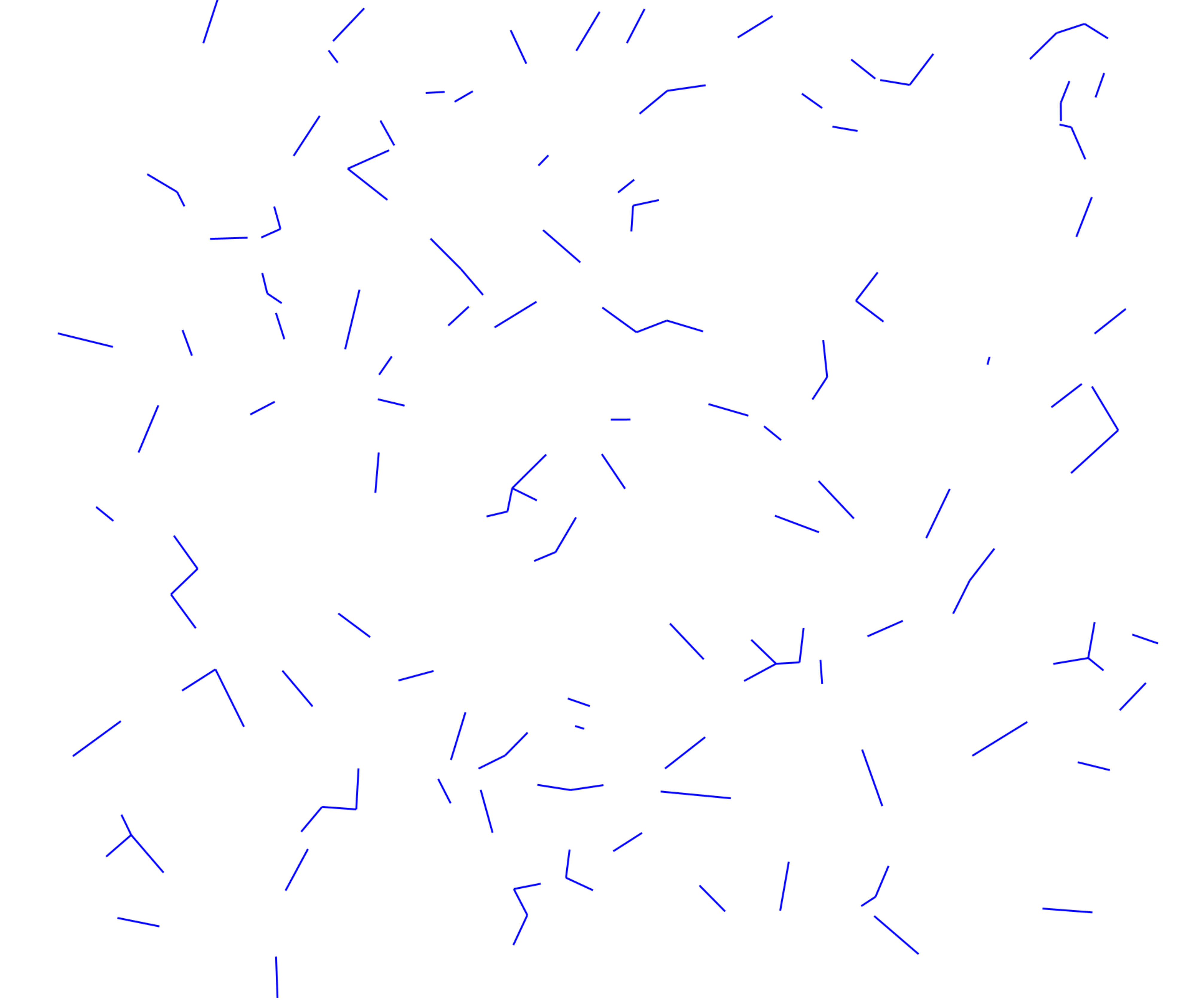}
\caption{\textbf{Left picture}: The \( f_0 \)-graph generated on PPP in blue with the \(0\)-cycles shown in bold;
\textbf{Right picture}: Cluster subtrees of order 0 obtained by deleting the edges of the \(0\)-cycles 
    }
\label{Fi0}
\end{figure}
\begin{definition}
\label{Exit0}
Let \(\{C^k_i\}_{i}\) denote the collection of \(k\)-cycles,
and let \(\{S^k_i\}_i\) be the collection of pairs of points in these cycles,
namely \(S^k_i\) is the pair of head points belonging to the cycle \(C^k_i\).
The sets \(\{S^k_i\}_i\) form a translation-invariant partition of the support of \(\Phi^k_c\).

Consider the nearest neighbor map, denoted by \textbf{NNk}, on \(\{S^k_i\}_i\),
which maps each pair \(S^k_i\) to its nearest neighbor pair in \(\{S^k_j\}_j\). 

The distance between pairs is measured using the single-linkage pseudo-distance
\begin{equation}
\label{eq:myd1}
\delta(S,T)=\min_{x\in S,\;y\in T}\mathfrak d(x,y),
\end{equation}
where $\mathfrak d$ denotes the Euclidean distance.


Specifically, \(S_i^k\) maps to \(S_j^k\) if \(\delta(S_j^k, S_i^k) < \delta(S_k^k, S_i^k)\) for all \(k \ne i, j\).

The \textbf{NNk graph} is a directed graph on \(\{S^k_i\}_i\).
There is an edge from \(S^k_i\) to \(\mathrm{NNk}(S^k_i)\), the nearest neighbor of \(S^k_i\) based on the single linkage distance.
Let \(\mathrm{NNk}(S^k_i) = S^k_j\). There is a point, denoted \(x_{i}\), in \(S^k_i\) and a point, \(x_{j}\), in \(S^k_j\) 
that achieve the minimum in \(\delta(S^k_i, S^k_j)\), denoted by \(\mathrm{nnk}(x_{i}) = x_{j}\).
The point \(x_{i}\) is called the \textbf{exit point of order $k$} in the pair \(S^k_i\).
\end{definition}
Using the exit points defined above, we now construct the next level of the hierarchy. Define \( f_{k+1} \) to be the point-shift on \( \Phi^0 \) that coincides with \( f_k \)
everywhere except for the exit points of order $k$. For each such point, say \( x \in S^k \), where \( S^k \)
is the pair where \( x \) is the exit point, define \( f_{k+1}(x) \) to be the point \(\mathrm{nnk}(x)\).
In other words, for each exit point \( x \) of an \( f_k \)-cycle, \( f_{k+1} \) is obtained from \( f_k \)
by replacing the initial image (which was the mutual nearest neighbor of \( x \) with respect to \( d \))
by \( \mathrm{nnk}(x) \). 

As mentioned before each connected component of the graph of \( f_{k+1} \) on \( \Phi^0 \) are the \textbf{cluster of order $k+1$}.
It will be shown in Corollary \ref{006} and Proposition \ref{008} that these connected components are almost surely finite meaning that each cluster in the graph of \( f_{k+1} \) is finite and contains a unique cycle.
These cycles are of length 2 a.s. and connect two exit points of order $k$, denoted \( x \) and \( y \), 
such that \( \mathrm{nn(k+1)}(x)=y \) and \( \mathrm{nn(k+1)}(y)=x \). See Figure \ref{Fi1} as an example of \( f_1 \)-graph.
\begin{figure}
\centering
	\includegraphics[width=6cm,height=5cm]{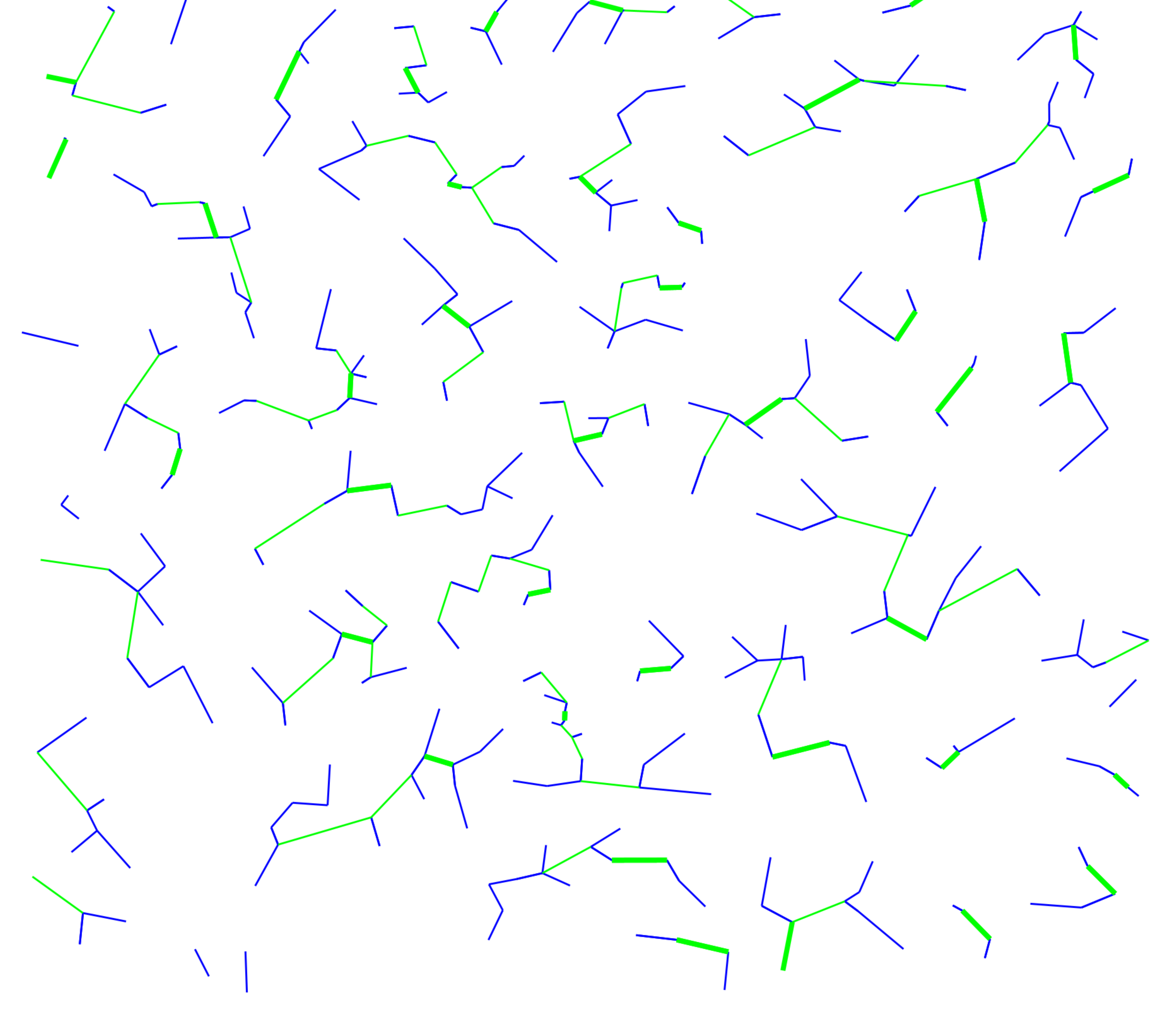} \hspace{1cm}
\includegraphics[width=6cm,height=5cm]{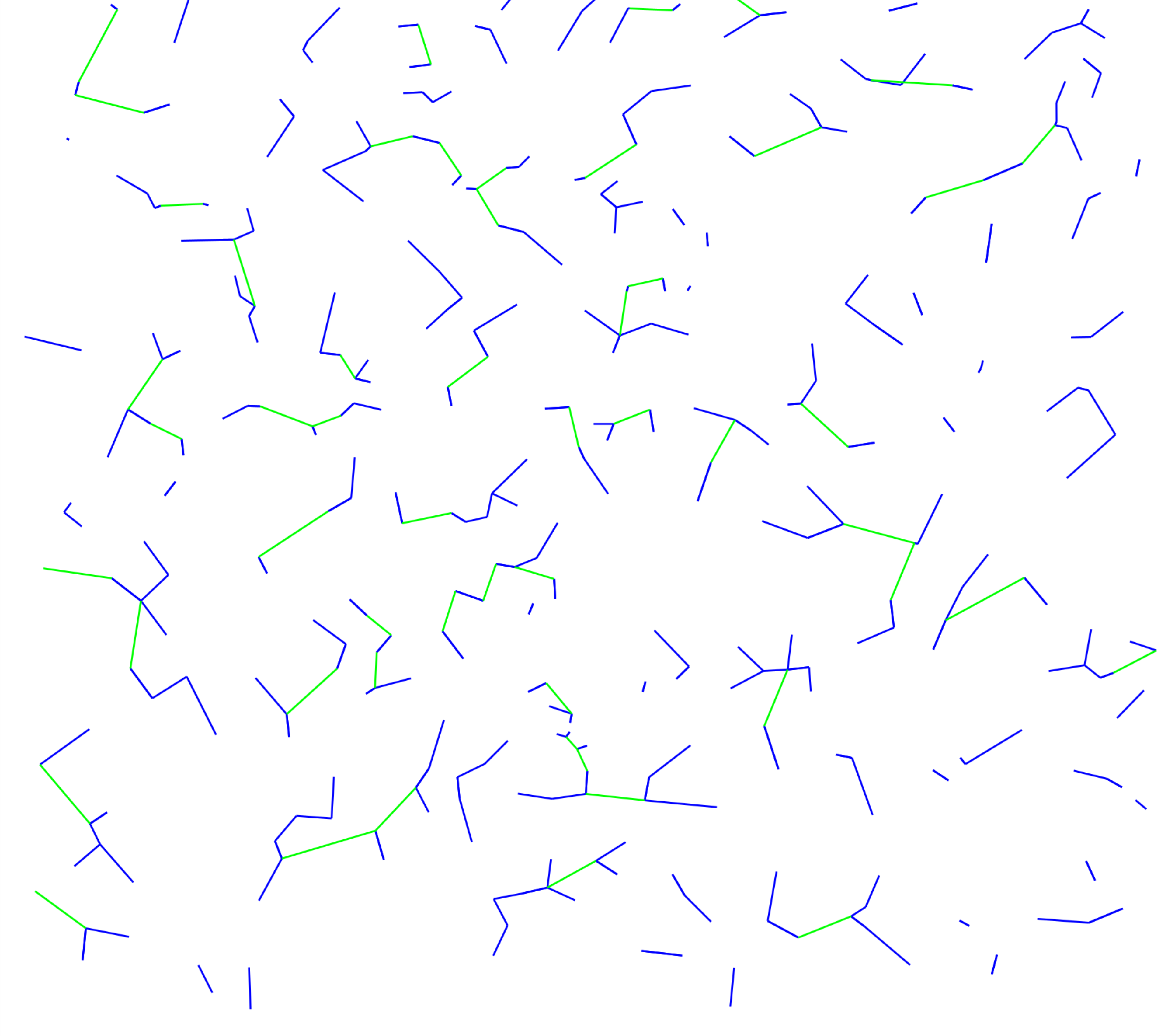}
\caption{\textbf{Left picture:} The \( f_1 \)-graph generated on the PPP with the \(1\)-cycles shown in green bold. 
\textbf{Right picture:} Cluster subtrees of order 1 obtained by deleting the edges of the \(1\)-cycles (clusters consisting of a single point are not visible here). 
}
\label{Fi1}
\end{figure}
\begin{remark}
\label{R013}
Denote by $\Phi_{e}^k$ the point process of exit points of order $k$, and let the intensity of $\Phi_{e}^0$ be $\rho/2$. 
Under the first level of the $\mathrm{CHN}^2$ clustering algorithm, each point in $\Phi_{e}^0$ is either a cluster head of 
order one or is connected to a cluster head of order one, and each resulting cluster contains exactly two cluster heads. 
Furthermore, when two cluster heads of order one belong to the same cluster of order one, one of them becomes an exit 
point of order one while the other does not. Therefore, the intensity of exit points of order one is less than 
$\rho / 4$. By the same reasoning, the intensity of exit points of order $k$ is less than $\rho / 2^{k+1}$.
Figure \ref{intensities2} shows simulation estimates of the intensity of exit points at level $k$. 
As seen in the figure, the intensity at each level is reduced by approximately a factor of three, 
suggesting that the estimated intensity of exit points at level $k$ is close to $\rho / 3^{k+1}$.
\end{remark}
\begin{figure}
\centering
\includegraphics[width=7cm,height=5cm]{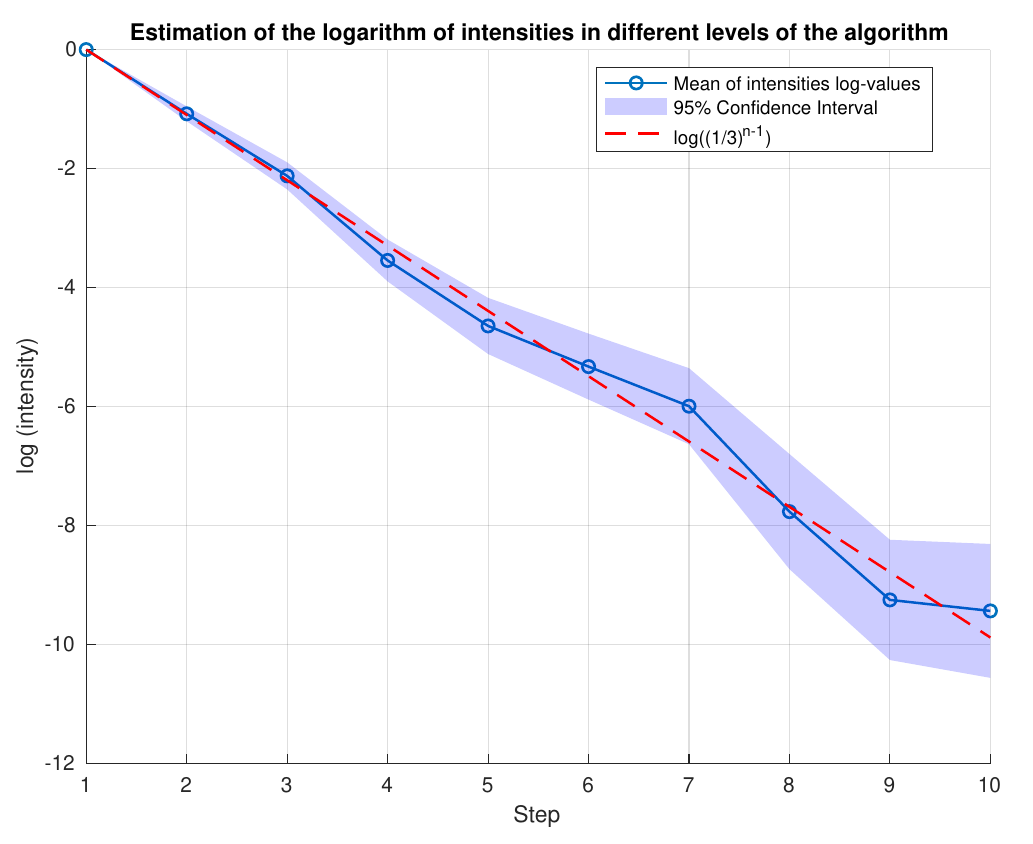} 
\caption{Estimation of the (logarithm of the) intensity of the exit points at different levels of the algorithm. The initial point process contains approximately 12,000 points. 
As shown in the plot, the intensity decays by a factor close to \(1/3\) at each level.
}
\label{intensities2}
\end{figure}
\subsection{Construction of the Limiting Point-Shift} 
\label{LIMITPS}
This subsection discusses the existence and properties of the limiting point-shift and point-shift graph
of the $\mathrm{CHN}^2$ clustering algorithm $\Phi^0$.
\begin{definition}
Consider the sequence $f_k$ of point-shifts constructed in Subsection~\ref{0011C} on $\Phi^0$. Since the sets $\{\Phi_a^{k+1}\setminus\Phi_a^k\}_{k\ge0}$, $\Phi_a^0=\varnothing$, form a partition of $\Phi^0$ (see Remark~\ref{partition}), one may define a limiting point-shift $f^\infty$ by assigning to each point the image it receives at the unique level at which it leaves the cluster-head process. The graph of this point-shift, referred to as the $f^\infty$-graph and denoted by $G$, is acyclic by construction.

The \(f^\infty\) point-shift is called the Centroid Hierarchical Nearest Neighbor Point-Shift ($\mathrm{CHN^2PS}$),
and its associated graph is called the Centroid Hierarchical Nearest Neighbor Eternal Family Forest
($\mathrm{CHN}^2$ EFF). When this forest is a tree, it is referred to as the Centroid Hierarchical Nearest
Neighbor Eternal Family Tree ($\mathrm{CHN}^2$ EFT).
\end{definition}
\begin{remark}
\label{partition}
Since $\Phi_c^{k+1}\subseteq \Phi_c^k$ for all $k$, the events
$\{0\in \Phi_c^k\}$ are decreasing under the Palm probability. Moreover, it is shown in Remark \ref{R013} that 
the intensity $\rho_k$ of $\Phi_c^k$ tends to zero as $k\to\infty$.
Hence,
\begin{equation}
\mathbb P^0\left(0\in\bigcap_{k\ge0}\Phi_c^k\right)
=
\lim_{k\to\infty}\mathbb P^0(0\in\Phi_c^k)
=
\lim_{k\to\infty}\rho_k
=
0.
\end{equation}
Therefore, almost surely every point of $\Phi^0$ eventually ceases to be a cluster head. Equivalently, the collection
$
\{\Phi_a^{k+1}\setminus\Phi_a^k\}_{k\ge0}
$
forms an almost-sure partition of $\Phi^0$.
\end{remark}


\begin{theorem}[Construction of the point-shift graph]
\label{Existence of the limit point-shift}
Under the Palm probability of $\Phi^0$ and rooted at zero,
the $\mathrm{CHN}^2$ EFF is the local weak limit of the $f_{n}$-graphs on $\Phi^0$ as $n\to \infty$. 
This limit graph is unimodular. 
\end{theorem}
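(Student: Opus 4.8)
The plan is to prove the two assertions in turn: the local weak convergence $[G_n,o]\to[G,o]$ (with $G$ the $\mathrm{CHN}^2$ EFF) under the Palm probability of $\Phi^0$ rooted at $o=0$, and then the unimodularity of $G$. The backbone of the convergence argument is the \emph{pointwise stabilization} of the point-shifts. Since the cyclic processes are nested, $\Phi_c^0\supseteq\Phi_c^1\supseteq\cdots$, with intensities bounded by $\rho/2^k\to0$, one has $\bigcap_k\Phi_c^k=\emptyset$ almost surely, so every point $x$ is acyclic from some finite level $k(x)$ onward. As $f^n$ differs from $f^{n-1}$ only on the exit points of order $n-1$ (which lie in $\Phi_c^{n-1}$), the image of $x$ is frozen for $n\ge k(x)$, i.e. $f^n(x)=f^\infty(x)$. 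This recovers the well-definedness of $f^\infty$ and the eventual constancy of every forward orbit $x,f^n(x),(f^n)^2(x),\dots$.

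The hard part will be upgrading this pointwise statement to convergence of the rooted balls $B_{G_n}(o,r)$, since a priori a vertex may enter or leave a ball when edges are rerouted. First I would record the key structural fact about preimages: for $n>k(x)$ the set $f^{-n}(x)=\{w:f^n(w)=x\}$ is \emph{decreasing} in $n$. Indeed, the only edges created at level $n$ are the rerouted edges of the exit points of order $n-1$, and their targets lie in a head pair of order $n-1$, hence in $\Phi_c^{n-1}$; since $x\notin\Phi_c^{n-1}$ for $n>k(x)$, no new edge can point at $x$, whereas an old preimage $w$ is dropped exactly when $w$ itself becomes such an exit point. A decreasing sequence of finite sets (finite because each component of $G_n$ is finite by Proposition~\ref{F/Fproperty} and Theorem~\ref{FFPROPERTY}) stabilizes, so $f^{-n}(x)=f^{-\infty}(x):=\{w:f^\infty(w)=x\}$ for all $n$ beyond a finite level $N'(x)$. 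Hence the neighbor set of $x$ in $G_n$, namely its forward image together with its finitely many preimages, coincides with that in $G$ for $n\ge\max(k(x),N'(x))$, and in particular $G$ is locally finite.

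With neighbor sets stabilizing I would prove by induction on $r$ that, almost surely, $B_{G_n}(x,r)=B_G(x,r)$ for all $n$ large. The case $r=0$ is trivial; for the step, $B_{G_n}(x,r)$ is determined by $x$, its eventually fixed neighbors, and the radius-$(r-1)$ balls around those neighbors, each stabilizing by the inductive hypothesis, so the stabilization level is a maximum of finitely many finite levels. Taking $x=o$ shows that $B_{G_n}(o,r)$ is eventually equal to $B_G(o,r)$ for every $r$, which is local (hence local weak) convergence.

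For unimodularity I would argue directly: $f^\infty(x)=\lim_n f^n(x)$ is a measurable, translation-covariant self-map of $\Phi^0$, i.e. a genuine point-shift, so its graph under the Palm probability satisfies the Mass Transport Principle by the same argument used for $f^0$ in Proposition~\ref{F/Fproperty}. Alternatively, since each $[G_n,o]$ is unimodular and the convergence just proved is almost-sure local convergence with locally finite limit, unimodularity passes to the limit by the standard stability of the Mass Transport Principle under local weak limits. Either route yields that the $\mathrm{CHN}^2$ EFF is unimodular. I expect the preimage-stabilization step to be the principal obstacle, since it is exactly what rules out transient rerouting of edges near the root and makes the convergence of balls almost sure rather than merely distributional.
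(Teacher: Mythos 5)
Your proposal is correct and follows essentially the same route as the paper: both arguments rest on the fact that the intensity of the cyclic points $\Phi^k_c$ vanishes, so the $f^n$-graphs stabilize almost surely on bounded neighborhoods of the root, and unimodularity then passes to the local weak limit from the unimodular prelimits. Your preimage-stabilization lemma is a careful rendering of a step the paper leaves implicit (the paper argues via restriction to Euclidean balls that are eventually empty of $\Phi^{n+1}_c$), but it does not change the underlying argument.
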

\begin{proof}
This is assessed under the Palm probability of $\Phi^0$. By Remark~\ref{partition}, every point of $\Phi^0$ eventually leaves the cluster-head process. Consequently, the image of each point under the sequence of point-shifts $\{f_{n}\}_{n\in\mathbb{N}}$ changes only finitely many times. Fix $h>0$. Since the ball $B_h(0)$ contains only finitely many points of $\Phi^0$ a.s., there exists an a.s. finite random integer $N(h)$ such that no point of $B_h(0)$ belongs to $\Phi_c^{N(h)+1}$. Hence, no further modification of the graph occurs inside $B_h(0)$ after level $N(h)$, and the restriction of the graph $G^n$ to $B_h(0)$ remains unchanged for all $n\ge N(h)$. Therefore, for every $h>0$, the restriction of $G^n$ to $B_h(0)$ converges almost surely as $n\to\infty$. This defines a limiting rooted graph, denoted by $[G,0]$. For every $n$, the graph $[G^n,0]$ is unimodular under the Palm probability of $\Phi^0$ (see \cite{haji_mirsadeghi_baccelli2018}). Since unimodularity is preserved under local limits (see \cite{Baccelli2017}), the limiting graph $[G,0]$ is also unimodular.
\end{proof}
Before stating the next results, we briefly recall the terminology from unimodular network theory that will be used throughout the paper.

Let $F$ be a point-shift on a stationary point process $\phi$. Consider the equivalence relation
\[
x \sim y
\quad\Longleftrightarrow\quad
\exists\, m,n\ge0 \text{ such that } F^{m}(x)=F^{n}(y).
\]
The corresponding equivalence classes are called \emph{foils}. Intuitively, foils are the level sets transverse to the flow induced by the point-shift.

We shall use the Foil Classification Theorem and the No Infinite/Finite Inclusion Theorem from \cite{Baccelli2017}. These results classify the connected components of the $F$-graph according to the finiteness or infiniteness of both the component itself and its foils. In particular, the notation $\mathcal{F/F}$ refers to finite components with finite foils, whereas $\mathcal{I/I}$ refers to infinite components with infinite foils.
\begin{theorem}
\label{FFPROPERTY}
For each $n\in \mathbb{N}$, the connected components of the graph $[G^n,0]$ are all of type $\mathcal{F/F}$.
\end{theorem}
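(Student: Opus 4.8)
The plan is to proceed by induction on $n$, the base case $n=0$ being exactly Proposition~\ref{F/Fproperty}. Fix $n\ge 1$ and assume the Induction Assumption at level $n-1$: the $f^{n-1}$-graph is $\mathcal{F/F}$, its $(n-1)$-cycles have length two, and $\Phi^{n-1}_c$ has finite intensity, so that the pairs $\{S^{n-1}_i\}_i$ and the single-linkage nearest-neighbor map $\mathrm{NNk}$ (with $k=n$) are almost surely well defined and tie-free. The $f^n$-graph is a covariant point-shift graph on $\Phi^0$ under its Palm probability, hence unimodular (Theorem~\ref{Existence of the limit point-shift} and \cite{haji_mirsadeghi_baccelli2018}). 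For such a graph the Foil Classification Theorem (\cite{Baccelli2017}) yields a sharp dichotomy: a connected component that contains a cycle is almost surely finite — hence of type $\mathcal{F/F}$, being a finite functional graph carrying exactly one cycle — whereas a cycle-free component is infinite (type $\mathcal{I/F}$ or $\mathcal{I/I}$). Concretely, transporting a unit of mass from each vertex to the first cycle-point on its forward orbit forces, by the Mass Transport Principle, every cycle-point to receive finite mass, so the basin of any cycle is a.s.\ finite. It therefore suffices to show that every forward $f^n$-orbit reaches a cycle, i.e.\ that the $f^n$-graph contains no infinite forward directed path.

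First I would describe the forward orbits of $f^n$. Since $f^n$ agrees with $f^{n-1}$ except at the exit points of order $n-1$, following $f^n$ from an arbitrary point climbs, by the inductive hypothesis, in finitely many steps to an $(n-1)$-cycle, that is, to one of the pairs $S^{n-1}_i$; at its exit point the orbit then jumps, via $\mathrm{nnk}$, to the entry point of the neighboring pair $\mathrm{NNk}(S^{n-1}_i)$. Consequently an infinite forward $f^n$-orbit must traverse an infinite sequence of pairs $S_0\to S_1\to S_2\to\cdots$ with $S_{\ell+1}=\mathrm{NNk}(S_\ell)$ for every $\ell$.

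Along this sequence the single-linkage distances strictly decrease. Indeed, $S_{\ell+2}=\mathrm{NNk}(S_{\ell+1})$ is the single-linkage nearest neighbor of $S_{\ell+1}$ while $S_\ell$ is a competitor, so $\delta(S_{\ell+1},S_{\ell+2})\le\delta(S_{\ell+1},S_\ell)=\delta(S_\ell,S_{\ell+1})$; injectivity of the orbit gives $S_{\ell+2}\ne S_\ell$ and the a.s.\ absence of ties makes this strict, whence $\delta(S_0,S_1)>\delta(S_1,S_2)>\cdots$. Writing $a_\ell\in S_\ell$ and $b_{\ell+1}\in S_{\ell+1}$ for the points realizing $\delta(S_\ell,S_{\ell+1})=\mathfrak d(a_\ell,b_{\ell+1})$, this yields a sequence of points of $\Phi^0$ with strictly decreasing consecutive gaps in which the realizing point may switch between the two members $a_\ell,b_\ell$ of each pair, the two members lying at the bounded internal distance $\mathfrak d(a_\ell,b_\ell)$. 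This is precisely a \emph{second-order descending chain}, and I would invoke the key fact established in Section~\ref{ProofFF} that the homogeneous PPP admits no infinite second-order descending chain almost surely. The resulting contradiction shows that every forward $f^n$-orbit is finite, so every component contains its $n$-cycle and is therefore $\mathcal{F/F}$, closing the induction.

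I expect the crux to be the second-order descending chain step rather than the bookkeeping above. In the level-$0$ case the images are genuine nearest neighbors of points, and Daley's classical result on the absence of ordinary descending chains (\cite{Daley}) applies verbatim; here, by contrast, $\delta$ is only a pseudo-distance on \emph{sets}, the strict monotonicity lives at the level of pairs, and the points realizing the minima hop between the two members $a_\ell,b_\ell$ of each clustroid pair. Formalizing a second-order descending chain so as to record these bounded within-pair displacements, and then showing that their presence is still incompatible with the Poisson geometry (so that no such infinite chain survives almost surely), is the main obstacle; the reduction of finiteness of the whole component to mere termination of forward orbits, by contrast, is handled cleanly by unimodularity and the Foil Classification Theorem.
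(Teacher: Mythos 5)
Your proposal is correct and follows essentially the same route as the paper: reduce finiteness to the absence of an infinite forward $f^n$-path, observe that such a path would force a strictly decreasing sequence of single-linkage distances between successive clustroid pairs interleaved with short within-pair hops (hence a second-order descending chain), and invoke the almost-sure non-existence of second-order descending chains in the PPP — which is exactly the content of Propositions~\ref{p005}, \ref{008} and \ref{004SOD} in Section~\ref{ProofFF}. The only difference is cosmetic: you make explicit the mass-transport/foil-classification step that converts ``every forward orbit reaches a $2$-cycle'' into ``every component is finite,'' which the paper leaves implicit by citing the $\mathcal{F/F}$ characterization of \cite{Baccelli2017}.
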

\begin{proof}
In Section \ref{ProofFF}, it will be shown that, for all $n$, there is no infinite path in $[G^n,0]$.
This fact demonstrates that, for each $n$, all the clusters of $[G^n,0]$ are finite. Note that the cycle
indicated in the properties of the $\mathcal{F/F}$ components (see Theorem 3.10 in \cite{Baccelli2017}) 
in each cluster of $f_n$-graph is the cycle of mutual nearest neighbor points, which is a cycle of length two a.s. 
\end{proof}
\begin{remark} 
Theorem \ref{FFPROPERTY} implies that clusters of level $n$ in the $\mathrm{CHN}^2$ 
algorithm are almost surely finite for each $n$.
\end{remark}
In the next proposition, it will be shown that all the connected components of the $\mathrm{CHN}^2$ EFF
share the same distribution.
\begin{proposition}
\label{Identicalcomponents}
Let $G$ be the $\mathrm{CHN}^2$ EFF under the Palm probability of $\Phi^{0} \subseteq \mathbb{R}^{d}$,
and let $f^\infty$ be $\mathrm{CHN^2PS}$ on $G$. Then the distribution of $[G, 0]$ is identical to 
the distribution of $[G, v]$, for all $v$ in the support of $\Phi^{0}$.
\end{proposition}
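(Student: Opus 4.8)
The plan is to deduce the statement from the point-stationarity of the Palm probability $\mathbb{P}^{0}$ of $\Phi^{0}$, combined with the translation-covariance of the construction of $G$. First I would record the covariance property: every $f^{n}$, and hence the limit $f^{\infty}$, is defined through translation-invariant geometric rules (nearest neighbours of pairs and single-linkage distances), so that $G(\theta_{x}\Phi^{0})=\theta_{x}\,G(\Phi^{0})$ for every $x\in\mathbb{R}^{d}$, where $\theta_{x}$ denotes translation by $-x$. Thus $(\Phi^{0},G)$ is a jointly stationary marked configuration, and $[G,0]$ under $\mathbb{P}^{0}$ is precisely the rooted graph seen from the typical point of this covariant structure.

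Second, I would invoke the characterization of $\mathbb{P}^{0}$ as point-stationary: the Palm probability of a stationary point process is invariant under every bijective point-shift (see \cite{Mecke,Tho00}). Concretely, for a measurable translation-covariant bijection $\theta$ of the support of $\Phi^{0}$ onto itself, the re-centering $\omega\mapsto\theta_{\theta(0)}\omega$ preserves $\mathbb{P}^{0}$. Since $G$ is a covariant factor, this re-centering carries $[G,0]$ to $[G,\theta(0)]$, and therefore $[G,\theta(0)]\myeq[G,0]$. Reading ``$v$ in the support of $\Phi^{0}$'' through such bijective point-shifts then yields $[G,v]\myeq[G,0]$, which is the precise sense in which all components of the EFF share a common rooted-at-a-typical-point law.

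Alternatively, to keep the argument self-contained within the paper, I would derive the same invariance directly from the Mass Transport Principle established in Theorem~\ref{Existence of the limit point-shift}. Fixing a covariant bijective matching $m$ between points (so that $m(\cdot)$ realizes the target $v=m(0)$), I would apply the MTP to the transport kernel $f(G,x,y)=\mathbbm{1}\{y=m(x)\}\,h([G,x])$ for a bounded test functional $h$; the covariance of $m$ and of $G$ turns the equality of incoming and outgoing transported mass into $\mathbb{E}^{0}[h([G,0])]=\mathbb{E}^{0}[h([G,m(0)])]$, which is the desired equality of laws once $h$ ranges over a separating class.

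The main obstacle, and the point requiring care, is the interpretation of ``for all $v$.'' Taken literally for an arbitrary \emph{biased} covariant choice the identity fails: for example $v=f^{\infty}(0)$, the parent of the root, is by construction never a leaf, whereas the typical point $0$ is a leaf with positive probability, so $[G,f^{\infty}(0)]$ and $[G,0]$ already differ in law. The correct and provable reading is through unbiased selections, i.e. bijective point-shifts, under which point-stationarity applies; I would therefore state the invariance in that form, which is exactly the statistical equivalence of the points of $\Phi^{0}$ that the proposition intends.
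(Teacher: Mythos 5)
Your proposal is correct and follows essentially the same route as the paper: the paper's proof also deduces the claim from Mecke's point-stationarity theorem applied to a bijective point-shift, which it obtains concretely as the successor map along the factor doubly-infinite path of Holroyd and Peres (your MTP reformulation is the standard equivalent of that step). Your caveat that the literal ``for all $v$'' must be read through \emph{bijective} covariant selections --- and fails for biased choices such as $v=f^{\infty}(0)$ --- is a valid clarification consistent with how the paper's argument actually operates.
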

\begin{proof}
It is known that there exists a directed graph $G'$ with vertex set $\Phi^{0}$,
that is constructed in a deterministic and isometry-invariant way, which is a.s. a doubly infinite path 
(see \cite{Matching}). Consequently, a point-shift on $\Phi^{0}$ can be obtained by mapping each vertex 
to its subsequent vertex following the direction of this path. This point-shift is a bijection. 
By applying Mecke's Point Stationary Theorem (see \cite{haji_mirsadeghi_baccelli2018}), 
it can be concluded that the distribution of $[G, 0]$ is identical to the distribution of $[G,v]$.  
\end{proof}

\begin{corollary}
\label{IIconnected}
The connected components of the \(\mathrm{CHN}^2\) EFF, \([G,0]\), belong to the \(\mathcal{I/I}\) class
of the foil classification theorem.
In particular, under the PPP, each connected component is a one-ended tree and all of its foils (equivalence classes induced by the point-shift) are infinite a.s.
\end{corollary}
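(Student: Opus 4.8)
The plan is to classify the component of the root $0$ and then transfer the conclusion to every component. By Theorem~\ref{Existence of the limit point-shift} the EFF $[G,0]$ is unimodular, so the Foil Classification Theorem of \cite{Baccelli2017} applies: a.s.\ each component is of type $\mathcal{F/F}$, $\mathcal{I/F}$, or $\mathcal{I/I}$. I would rule out the first two and, along the way, exhibit a single end. Proposition~\ref{Identicalcomponents} guarantees that all components are identically distributed, so establishing the type of the rooted component suffices to conclude for all of them simultaneously.

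First I would exclude $\mathcal{F/F}$. Since $G$ is acyclic by construction and $f^\infty$ is a bona fide vertex-shift (every point has an image), the forward orbit $0, f^\infty(0), (f^\infty)^2(0),\dots$ is injective; hence the ancestral ray of the root is infinite and its component is infinite. This leaves only $\mathcal{I/F}$ and $\mathcal{I/I}$.

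The heart of the proof is one-endedness. Because the component is a locally finite tree carrying an infinite ancestral ray, it has exactly one end if and only if the descendant subtree of every vertex is a.s.\ finite: an infinite descendant subtree would, by K\"onig's lemma, contain an infinite descendant chain $w_0,w_1,w_2,\dots$ with $f^\infty(w_{i+1})=w_i$, whose downward ray is separated from the ancestral ray by $w_0$ and therefore constitutes a second end. I would prove finiteness of descendant subtrees in two complementary ways. Heuristically, a mass-transport argument (send unit mass from each vertex to its image under $f^\infty$) gives mean offspring exactly $1$, so the genealogy is a critical unimodular family tree with genuinely random branching, whose descendant subtrees are a.s.\ finite; the degenerate two-ended alternative is the deterministic bi-infinite line, excluded by the nondegeneracy of the PPP branching. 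Rigorously, I would identify an infinite descendant chain in $G$ with an infinite second-order descending chain and invoke the a.s.\ absence of the latter for the PPP established in Section~\ref{ProofFF}; the receding of the unique $2$-cycle of the pre-limit components $G^n$ (valid since the intensity of $\Phi^n_c$ tends to $0$) is what converts the pre-limit finiteness into control of the limiting descendant structure.

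Finally I would upgrade one-endedness to $\mathcal{I/I}$. In a one-ended tree every ray converges to the unique end, so the ancestral rays of any two vertices $x,y$ eventually coincide; equivalently $f^m(x)=f^n(y)$ for some $m,n\ge 0$, so $x$ and $y$ lie in the same foil. Thus there is a single foil, equal to the whole infinite component, and in particular all foils are infinite, which rules out $\mathcal{I/F}$ and yields the $\mathcal{I/I}$ classification together with one-endedness. The main obstacle is precisely the finiteness of descendant subtrees in the limit: matching a genealogical descendant chain to a second-order descending chain, and showing that the cycles of the pre-limit graphs genuinely escape to infinity rather than leaving behind an infinite downward branch, is the delicate step on which the whole argument rests.
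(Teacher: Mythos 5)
Your skeleton matches the paper's: classify the rooted component, then transfer to all components via Proposition~\ref{Identicalcomponents}; and you correctly reduce one-endedness to the a.s.\ finiteness of descendant subtrees. But that reduction is exactly where your argument stops short, and the route you propose to close it does not work as stated. Propositions~\ref{p005} and~\ref{008} show that an infinite path \emph{in a fixed $f^n$-graph} is a second-order descending chain; their proofs rely on the fact that such a path eventually uses only two edge types (top-level MNN edges and top-level $\delta$-edges), which is what yields the inequality $d_i<\max(d_{i-1},d_{i-2})$. An infinite descendant chain in the limit graph $G$ is not contained in any single $f^n$-graph: its edges may come from unboundedly many levels, and no comparison between edge lengths of widely different levels is established anywhere in the paper. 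So "identify an infinite descendant chain in $G$ with a second-order descending chain" is an unproved claim, and you yourself flag it as the unresolved delicate step. Your mass-transport heuristic does not rescue this: mean offspring equal to $1$ holds for $\mathcal{I/F}$ components as well (two-ended, bi-infinite-path-like genealogies), so criticality alone cannot exclude infinite descendant subtrees.

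The missing idea, which is the whole content of the paper's proof, is a \emph{freezing} argument that never touches the limit graph directly. If $x\in\Phi^k_a$, then all descendants of $x$ in the $f^k$-graph also lie in $\Phi^k_a$ (cyclic points cannot map into acyclic ones), and every edge added at level $k+1$ or later terminates at a point of some pair $S^{j}_i\subseteq\Phi^k_c$; hence no new descendant is ever attached below $x$, and the descendant tree of $x$ in $G$ equals its descendant tree in $G^k$, which is finite by Theorem~\ref{FFPROPERTY}. Since the intensity of $\Phi^k_c$ tends to $0$, a.s.\ the root lies in $\Phi^{k+1}_a\setminus\Phi^k_a$ for some finite $k$, so its descendant tree is finite while its ancestral ray is infinite, and the Foil Classification Theorem gives $\mathcal{I/I}$. (Two smaller points: your final step claiming the whole component is "a single foil" relies on the relation $f^m(x)=f^n(y)$ as literally printed in the paper, which describes components rather than the transverse level sets; under the standard foil definition $f^n(x)=f^n(y)$ for a common $n$ you should simply invoke the classification theorem to get infinite foils from one-endedness. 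Also, the alternative clean route is the evaporation argument of Remark~\ref{rmk:evaporation}, which again uses the vanishing intensity of exit points rather than descending chains in the limit.)
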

\begin{proof}
As mentioned previously, at each level \( k \), the equality \(\Phi^0 = \Phi^k_c \cup \Phi^k_a\) holds, where \(\Phi^k_c\) denotes the cluster heads and \(\Phi^k_a\) represents the acyclic points of order \(n\). For a point belonging to \(\Phi^k_a\) at some level \(k\), the number of its descendants is finite and does not change thereafter due to the construction of the $\mathrm{CHN}^2$ EFF. 

Since the intensity of \(\Phi^k_c\) tends to zero as \(k\) approaches infinity, the sets \(\{\Phi^{k+1}_a \setminus \Phi^k_a\}_{n \geq 0}\), with \(\Phi^0_a = \emptyset\), form a partition of \(\Phi^0\). Therefore, almost surely, there exists a level \(k\) such that zero belongs to \(\Phi^{k+1}_a \setminus \Phi^k_a\). Hence, its descendant tree is almost surely finite. Consequently, the connected component of zero is in the \(\mathcal{I/I}\) class. In other words, the cluster containing the origin in the $\mathrm{CHN}^2$ EFF graph is a one-ended tree. 

Proposition \ref{Identicalcomponents} implies that all other connected components are also in the \(\mathcal{I/I}\) class.
\end{proof}

\subsubsection{The Existence of a Last Universal Common Ancestor at infinity.}
Considering the genetic data of the species interpretation discussed in the introduction,
one can gain clearer insights by examining the properties of the $\mathrm{CHN}^2$ EFF graph.
Firstly, the $\mathcal{I/I}$ property of the graph shows that each species (point) belongs to an infinite phylogenetic tree.
The $\mathcal{I/I}$ property also shows that there exists a LUCA (Last Universal Common Ancestor) "at infinity",
from which all species in the cluster are descendants. This raises the question of whether there are multiple LUCAs
or just one in the clustering of the Poisson point process. This is equivalent to determining the number of 
connected components that exist in the $\mathrm{CHN}^2$ EFF.\\
Although the number of connected components is generally unknown, in dimension $d = 1$,
the following theorem shows that the $\mathrm{CHN}^2$ EFF has only one connected component.

\begin{theorem}[Connectivity of the limiting graph in dimension $d=1$]
\label{Dimention1}
In dimension $d=1$, under the Palm probability of $\Phi^{0} \subseteq \mathbb{R}$, the limiting $\mathrm{CHN}^2$ EFF is almost surely connected.
\end{theorem}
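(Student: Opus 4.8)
The plan is to exploit the total order of $\mathbb{R}$ to show that in dimension $d=1$ every cluster produced by $\mathrm{CHN}^2$ is an \emph{interval} of consecutive points of $\Phi^0$, and then to rule out more than one component of the EFF by a stationarity/intensity argument. The structural interval property is what makes $d=1$ special (and is precisely what fails in higher dimension, where connectivity is left open).

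First I would prove, by induction on the level $k$, that each connected component of the $f^k$-graph is a set of consecutive points of $\Phi^0$. At level $0$ this holds because in $\mathbb{R}$ the nearest neighbor of a point is one of its two adjacent points, so every edge of the $f^0$-graph joins consecutive points; hence the components are the maximal runs of consecutive points, the breaks occurring exactly at gaps that are strict local maxima among inter-point gaps. For the induction step, order the clustroid pairs $\{S^k_i\}_i$ along the line by the position of their clusters. Since each cluster of order $k$ is an interval (induction hypothesis) and $S^k_i$ sits inside it, the pairs inherit the linear order of their clusters. A betweenness estimate then gives $\delta(S^k_i,S^k_{i+1})<\delta(S^k_i,S^k_{i+2})$ for the single-linkage distance $\delta$, because every point of $S^k_{i+1}$ lies strictly between $S^k_i$ and $S^k_{i+2}$; thus the level-$(k+1)$ nearest-neighbor map sends each pair to one of its two order-adjacent pairs, so level-$(k+1)$ clusters are unions of order-adjacent level-$k$ intervals, hence again intervals. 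Ties are excluded almost surely for the PPP.

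Next I would pass to the limit graph $G$. When going from level $k$ to level $k+1$, a cluster loses only one of its two cycle edges (the exit point is re-routed, but the opposite cycle edge and all subtree edges survive), so any two points lying in a common level-$k$ cluster stay connected in every later $f^j$-graph and hence in $G$; conversely, a finite $G$-path uses only edges that stabilise at some finite level, so its endpoints share a cluster of some finite order. Therefore the component of a point in $G$ is the increasing union of its level-$k$ clusters, which is again an interval. By Corollary~\ref{IIconnected} every component is infinite, and an infinite interval of the locally finite set $\Phi^0$ must be unbounded (a bounded region carries finitely many points a.s.). Since two disjoint intervals can be simultaneously unbounded only toward $-\infty$ and toward $+\infty$, the number $N$ of components satisfies $N\le 2$.

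Finally I would rule out $N=2$. The number $N$ is a translation-invariant functional of $\Phi^0$, hence almost surely constant by ergodicity of the PPP. If $N=2$, the two interval-components are separated by exactly one gap; marking the left endpoint of that gap defines a simple point process $B\subseteq\Phi^0$ that is a translation-covariant factor of $\Phi^0$, hence stationary, and that contains exactly one point almost surely. But a stationary point process on $\mathbb{R}$ satisfies $\mathbb{E}[B([0,T])]=\lambda_B\,T$, so $\lambda_B T\le 1$ for all $T$ forces $\lambda_B=0$ and thus $B=\emptyset$ almost surely, contradicting the presence of a boundary. Hence $N=1$, and since $\{N=1\}$ is shift-invariant the statement transfers in the standard way to the Palm probability. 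The main obstacle is the structural lemma: one must check carefully that the clustroid-pair/single-linkage mechanism never links non-adjacent clusters at any level and that the interval property survives the re-routing of exit points in the limit; everything downstream — unboundedness from the $\mathcal{I/I}$ property and the intensity contradiction — is then routine.
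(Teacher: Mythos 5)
Your proposal is correct, and its overall architecture coincides with the paper's: establish that in $d=1$ every component of the limiting forest is an order-convex block of $\Phi^0$, conclude from infiniteness (Corollary~\ref{IIconnected}) that there are at most two such blocks, and then derive a contradiction from the existence of a covariantly defined ``boundary'' between them. Where you differ is in how the two halves are executed, and in both places your version is the more explicit one. For the structural half, the paper simply asserts that each component ``connects only to one of its neighbors on the left or on the right''; your inductive betweenness estimate $\delta(S^k_i,S^k_{i+1})<\delta(S^k_i,S^k_{i+2})$ for the single-linkage distance, together with the observation that passing from $f^k$ to $f^{k+1}$ removes only one of the two cycle edges so that level-$k$ clusters remain connected and nest into level-$(k+1)$ clusters, is exactly the missing justification, and it is correct. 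For the second half, the paper invokes the No Infinite/Finite Inclusion theorem for unimodular networks (a covariant subset cannot meet an infinite element of a covariant partition in a finite nonempty set), applied to the two extremal vertices flanking the separating gap; you instead note that the left endpoint of that gap would be a translation-covariant, hence stationary, point process on $\mathbb{R}$ with exactly one point almost surely, which is impossible since its mean measure must be a multiple of Lebesgue measure. These are the same mass-transport idea, but your instantiation is elementary and self-contained in the Euclidean stationary setting, at the cost of an extra (harmless) appeal to ergodicity to fix $N$ and a final transfer from the stationary to the Palm probability via shift-invariance of $\{N=1\}$. No gap; if anything, your write-up supplies details the paper leaves implicit.
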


\begin{proof}
By Corollary~\ref{Existence of the limit point-shift}, the limiting $\mathrm{CHN}^2$ EFF $[G,0]$ is unimodular (as a local weak limit of the unimodular graphs $[G^n,0]$; see also~\cite{Baccelli2017}). 
By Corollary~\ref{IIconnected}, each connected component of $[G,0]$ is infinite almost surely.

Assume for the sake of contradiction that $[G,0]$ is not connected. 
Because the construction takes place on the line and each component, at every level, connects only to one of its neighbors on the left or on the right, each connected component occupies a consecutive block of vertices along the line (with no interlacing between components). 
Hence, distinct components occupy disjoint order-convex blocks along $\mathbb{R}$; since every component is infinite, there can be at most two such blocks—one extending to the left and one to the right.

Let $S$ be the set containing the two extremal vertices separating these components i.e., the rightmost vertex of the left component and the leftmost vertex of the right component. These two vertices are the only vertices that have degree 1 in the graph and can be defined in a covariant way. 

Hence, $S$ is a covariant subset of vertices in the sense of Definition~2.7 in~\cite{Baccelli2017}. Furthermore, the decomposition of $[G,0]$ into its connected components defines a covariant partition (Definition~2.9 in~\cite{Baccelli2017}). However, Lemma~2.11 (\emph{No Infinite/Finite Inclusion}) in~\cite{Baccelli2017} states that, in a unimodular network, a covariant subset cannot have a finite, non-empty intersection with an infinite element of a covariant partition. Here, $S$ intersects each infinite component of the partition in exactly one vertex, violating this property. Equivalently, by Corollary~2.12 in~\cite{Baccelli2017}, any covariant subset in an infinite unimodular network is a.s. either empty or infinite.Since $S$ contains exactly two vertices, this leads to a contradiction. Therefore, the limiting $\mathrm{CHN}^2$ EFF must be connected a.s.
\end{proof}

 The conjecture is that this graph is connected in each dimension $d$ when constructed from a PPP, i.e., there is only one LUCA in the $\mathrm{CHN}^2$ EFF phylogenetic tree of a PPP.

\subsubsection{The Cardinality of Descendant Trees}
The construction of the $\mathrm{CHN}^2$ EFF allows us to calculate bounds on the mean number 
of species within the descendant tree of a typical cluster at a specific level $k$.  Note that
the direction of the edges in the graph is from child to parent. 
The $\mathcal{I /I}$ property (of the connected component of zero), ensures a finite number of descendants 
for each vertex (species), so it enables us to quantify the number of species in the descendant tree. More precisely, let the intensity of the cluster heads of level $k$, $\Phi_{c}^{k}$, be $\rho_{k}$. 
Then the mean of the cardinality of the descendant tree of a point belongs to $\Phi_{c}^{k}$, is $1/\rho_{k}$. 
It is known from Remark \ref{R013} that  $\rho_{k}$ tend to zero, as $k$ goes to infinity.
\begin{proposition}
\label{Infinitedes}
Let $[G,0]$ be the $\mathrm{CHN}^2$ EFF under the Palm probability of $\Phi^{0} \subseteq \mathbb{R}^{d}$. 
Let $N^{0}$ denote the cardinality of the descendant tree of the root $0$. Then $\mathbb{E}[N^{0}]=\infty$. 
In particular, $N^{0}$ has a heavy tail.
\end{proposition}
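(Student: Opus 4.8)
The plan is to deduce the infinite mean from the Mass Transport Principle (MTP) applied directly to the limiting graph $G$, exploiting the structural asymmetry created by one-endedness: in the $\mathrm{CHN}^2$ EFF a typical vertex has finitely many descendants but infinitely many ancestors, and unimodularity forces the mean numbers of the two to coincide. First I would recall the two facts already available. By Theorem~\ref{Existence of the limit point-shift} the rooted graph $[G,0]$ is unimodular, so the MTP is at our disposal; by Corollary~\ref{IIconnected} every connected component of $G$ is a one-ended acyclic tree whose edges point from child to parent via $f^\infty$, and in the course of that proof it is shown that the descendant tree $D(0)$ of any vertex is almost surely finite. Thus $N^0=|D(0)|$ is a well-defined, a.s.\ finite, nonnegative integer-valued random variable, and what must be proved is that its \emph{mean} is nonetheless infinite.

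Next I would set up the transport kernel. Define
\[
t\big([G,x],z\big)=\mathbbm{1}\{\,z \text{ is a descendant of } x \text{ under } f^\infty\,\},
\]
i.e.\ $t=1$ iff $(f^\infty)^m(z)=x$ for some $m\ge 0$. This is a genuine covariant function of the doubly rooted graph because the parent map $f^\infty$ is a factor of $\Phi^0$. Summing over the second argument gives $\sum_z t([G,0],z)=N^0$, the number of descendants of the root, while summing over the first argument gives $\sum_x t([G,x],0)$, the number of \emph{ancestors} of the root, namely the cardinality of the forward orbit $\{0,\,f^\infty(0),\,(f^\infty)^2(0),\dots\}$. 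The MTP then yields the exact identity
\[
\mathbb{E}[N^0]=\mathbb{E}\big[\#\{\text{ancestors of }0\}\big].
\]

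The decisive step is the evaluation of the right-hand side. Since the component of $0$ is a one-ended acyclic tree (Corollary~\ref{IIconnected}), the ancestral ray $0\to f^\infty(0)\to (f^\infty)^2(0)\to\cdots$ never closes a cycle and never terminates, so it visits infinitely many distinct vertices; hence the number of ancestors of $0$ equals $+\infty$ almost surely, and its expectation is $+\infty$. The displayed identity then forces $\mathbb{E}[N^0]=\infty$, even though $N^0<\infty$ a.s. The \textbf{heavy tail} claim is then immediate: a nonnegative random variable that is a.s.\ finite yet has infinite mean satisfies $\sum_{t\ge 1}\mathbb{P}(N^0\ge t)=\infty$, so its tail cannot decay summably, in particular not exponentially.

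I expect the only delicate point to be the justification that the MTP may be invoked with a transport that is infinite in the aggregate: one must use the form of the Mass Transport Principle for unimodular networks valid for \emph{all} nonnegative (possibly non-integrable) kernels, so the equality of means holds as an identity in $[0,\infty]$ rather than among finite quantities. A secondary point worth one sentence is the a.s.\ infiniteness of the forward orbit, which I would derive from one-endedness and acyclicity rather than from the intensities $\rho_k$. As an alternative route (and a useful sanity check), one can work through the pre-limit graphs: the descendant trees $D_{f^k}(0)$ are nested and increasing with union $D(0)$, so $\mathbb{E}[N^0]=\lim_k\mathbb{E}^0[|D_{f^k}(0)|]$ by monotone convergence, the MTP on each $\mathcal{F/F}$ graph $[G^n,0]$ rewrites $\mathbb{E}^0[|D_{f^k}(0)|]$ as the mean number of ancestors of $0$ up to its level-$k$ cluster head, and the fact that $\bigcap_k\Phi^k_c$ has intensity $\lim_k\rho_k=0$ (hence is a.s.\ empty) pushes that head to infinity along the ancestral ray, again giving a limit of $+\infty$.
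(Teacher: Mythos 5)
Your proof is correct and follows essentially the same route as the paper: the authors also apply the Mass Transport Principle to the transport sending unit mass along ancestor--descendant pairs (theirs is phrased as each vertex sending mass to all its ancestors, the transpose of your kernel), and conclude $\mathbb{E}[N^{0}]=\infty$ from the a.s.\ infinite ancestral ray guaranteed by one-endedness. Your added remarks on the $[0,\infty]$-valued form of the MTP and the tail consequence are correct refinements of the same argument, not a different approach.
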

\begin{proof}
Since $[G,0]$ is unimodular, the Mass Transport Principle applies (see~\cite{Aldous}).  
Consider the $\mathrm{CHN}^2$ point-shift $f$ oriented from each vertex to its parent.  
Because every component is one-ended, the parent pointers form an infinite ancestral ray almost surely.  
Applying the MTP to the transport that sends one unit of mass from each vertex to all its ancestors 
yields $\mathbb{E}[N^{0}]=\infty$.
\end{proof}
\begin{remark}
The result stated in Proposition \ref{Infinitedes} remains valid for all vertices in the $\mathrm{CHN}^2$ EFF. 
This is due to Proposition \ref{Identicalcomponents}, which ensures that all connected components share the same distribution.
\end{remark}
\section{$\mathrm{CHN}^2$ on Cox Point Processes}
\label{CoxSection}

The aim of this section is twofold. The first one is to show that the algorithms proposed here
can be applied to other point processes than Poisson.
The second one is to see how the algorithms in question can be used to detect data aggregates when present.

Clustering algorithms are expected to detect aggregates of points in a dataset. 
Working with a Poisson point process is useful in this context because the PPP can be viewed as a null hypothesis,
i.e., a baseline model that has no structural underlying aggregates and where fine random local
properties are leveraged to nevertheless recursively cluster data in function of the dataset only.
To examine the behavior of the \(\mathrm{CHN}^2\) algorithm in scenarios where structural aggregates do occur, 
a natural testbed is that of Cox point processes. 

Specifically, Example~\ref{Ex.Cox} is constructed so that each realization contains distinct aggregates of points (see Remark~\ref{aggragates}) and serves to illustrate how the algorithm behaves when such aggregates are present. The properties established in Section~\ref{Ch:apter7}, including the finiteness of clusters at every hierarchical level and the existence of a unimodular limiting graph, continue to hold in this setting (Theorem~\ref{thm:transfer-no2dc}). In addition, the simulations suggest that the presence of aggregates can be identified through abrupt changes in the inter-cluster distances relative to a Poisson baseline; see Subsection~\ref{simulation2}.

\begin{remark}
\label{aggragates}
In many clustering algorithms, the term \emph{cluster} refers to a distinct subgroup of data points that
are more similar to one another than to points outside that subgroup. In contrast, in this paper, 
the word \emph{cluster} simply denotes a connected component at a given hierarchical level. 
To avoid confusion between these two usages, the term \emph{aggregates of points} will be used in this 
paper to denote data with distinct groups of similar objects.
\end{remark}

\begin{example}[Cox point process]
\label{Ex.Cox}
A point process \(N\) on a state space \(\mathcal{S}\) is called a \emph{Cox point process} 
(driven by the random measure \(M\)) if, conditioned on \(M\), it is a Poisson point process with intensity measure \(M\) (see, e.g.,~\cite[Section~2.3]{Bartek}). 
Equivalently, \(N\) is a Poisson point process whose intensity measure is itself random.

Let $\Xi \subseteq \mathbb{R}^d$ be a random closed subset such that
$
\mu(\Xi\cap B)>0
$
with positive probability for every bounded Borel set $B\in\mathcal B(\mathbb R^d)$, where
\(\mathcal{B}(\mathbb R^d)\) denotes the Borel \(\sigma\)-algebra of \(\mathbb R^d\). Define the random measure $M$ on $\mathbb R^d$ by
\begin{equation*}
M(B)=\mu(B\cap \Xi),
\qquad B\in\mathcal B(\mathbb R^d),
\end{equation*}
where \(\mu\) denotes the Lebesgue measure on \(\mathbb{R}^d\).  
Let \(N\) be the Cox point process driven by \(M\).  
Conditioned on the realization of \(\Xi\), \(N\) is a homogeneous Poisson point process restricted to the random region \(\Xi\); in particular, it does not admit any second-order descending chain.  
If the random set \(\Xi\) is stationary in law (for example, generated from a stationary marked point process), then \(N\) is also stationary.

\medskip
\noindent\emph{Special case.}  
In the simulations of Figure ~\ref{fig:cox-vertical}, the random set \(\Xi\) is chosen as a union of random balls,  
\(\Xi = \bigcup_{x \in \xi} B_{r_x}(x)\), where \(\xi\) is a stationary point process and \(\{r_x\}_{x \in \xi}\) are i.i.d.\ random radii.  
This construction illustrates the behavior of the $\mathrm{CHN}^2$ clustering algorithm when applied to Cox point processes exhibiting spatial aggregation.
\end{example}

The following theorem shows that all properties established for the homogeneous PPP extend to this Cox setting.
\begin{theorem}
\label{thm:transfer-no2dc}
Let $N$ be a stationary point process in $\mathbb{R}^d$ that almost surely admits no second-order
descending chain (Def.~\ref{017}), and assume the same general-position (no ties) conditions
as in Section~\ref{Ch:apter7}. This includes, in particular, any stationary sub--point process of a homogeneous PPP and the Cox point process defined in Example \ref{Ex.Cox}. Then the following properties hold for $\mathrm{CHN}^2$ on $N$:
\begin{enumerate}[label=(\roman*)]
\item For each $n\in\mathbb{N}$, the connected components of the $f_n$-graph
are almost surely in the $\mathcal{F/F}$ class (finite with a unique cycle). \label{it:FF}
\item The sequence of $f_n$-graphs converges locally weakly to a unimodular limit graph. \label{it:limit}
\item Under the Palm version of $N$, the connected component of the root
in the limiting graph belongs to the $\mathcal{I/I}$ class (almost surely one-ended with all foils infinite). \label{it:II-root}
\end{enumerate}
Moreover, if there exists a nontrivial bijective point-shift on $N$
(so that $[G,o]$ and $[G,v]$ have the same distribution for every vertex $v$),
then property~\ref{it:II-root} holds for \emph{all} components;
that is, every connected component of the limit is $\mathcal{I/I}$ almost surely.
\label{it:II-all}
\end{theorem}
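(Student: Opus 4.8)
The plan is to treat this as a transfer argument: I would re-examine each step of Section~\ref{Ch:apter7} and isolate the unique place where the Poisson law was actually used, namely the exclusion of infinite paths in the $f^n$-graphs via the absence of (second-order) descending chains (established for the PPP in \cite{Daley} and Section~\ref{ProofFF}). Every other ingredient---unimodularity of each level graph, the length-two cycles, the geometric decay of the cyclic intensities, the stabilization of the graph on bounded balls, and the evaporation argument---depends only on stationarity, the general-position (no-ties) assumption, and the $\mathcal{F/F}$ combinatorics of the construction, and therefore carries over verbatim once the descending-chain input is supplied by hypothesis.

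First I would prove~\ref{it:FF}. Unimodularity of $[G^n,o]$ holds because $f^n$ is a covariant point-shift on a stationary point process, so the $f^n$-graph is unimodular under the Palm probability of $N$ by Mecke's point-stationary theorem (see \cite{haji_mirsadeghi_baccelli2018,Baccelli2017}), exactly as for the PPP. Finiteness then comes from the hypothesis: the argument of Section~\ref{ProofFF} shows that an infinite path in $[G^n,o]$ would produce an infinite second-order descending chain, which is excluded almost surely by assumption, while the general-position hypothesis forces the unique cycle of each finite component to have length two. Hence every component is $\mathcal{F/F}$.

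Next I would establish the intensity decay and then~\ref{it:limit} and~\ref{it:II-root}. The halving estimate of Remark~\ref{R013} is purely structural: each $k$-cycle carries exactly two cluster heads and contributes a single exit point, and $\Phi^{k+1}_c\subseteq\Phi^k_e$, whence $\lambda(\Phi^{k+1}_c)\le\lambda(\Phi^k_e)=\tfrac12\lambda(\Phi^k_c)$; starting from $\lambda(\Phi^0_c)\le\lambda(N)<\infty$ this yields $\lambda(\Phi^k_c)\le\lambda(N)2^{-k}\to 0$. Summability of these intensities lets Borel--Cantelli guarantee that, on any fixed ball, only finitely many levels contribute cyclic points; the restriction of $G^n$ to the ball therefore stabilizes almost surely, the $f^n$-graphs converge locally weakly as in Theorem~\ref{Existence of the limit point-shift}, and the limit is unimodular as a local weak limit of unimodular graphs \cite{Baccelli2017}. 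For~\ref{it:II-root}, the same decay shows that the sets $\Phi^{k+1}_a\setminus\Phi^k_a$ partition the support of $N$, so almost surely the root becomes acyclic at a finite level with a finite, frozen descendant tree; equivalently $f^\infty$ evaporates the root component. Either formulation places that component in $\mathcal{I/I}$ (Corollary~24 of \cite{haji_mirsadeghi_baccelli2018}; cf.\ Remark~\ref{rmk:evaporation}). Finally, the \emph{moreover} clause follows as in Proposition~\ref{Identicalcomponents}: a nontrivial bijective point-shift gives, via Mecke's point-stationary theorem, $[G,o]\myeq[G,v]$ for every vertex $v$, so the $\mathcal{I/I}$ property propagates from the root component to all components.

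I expect the main obstacle to be the first step. One must check that the finiteness proof of Section~\ref{ProofFF} uses \emph{only} the absence of second-order descending chains together with general position, and does not covertly rely on finer Poisson features such as independence or void probabilities; once this separation is clean, substituting the standing hypothesis for the PPP input is immediate. A secondary verification is that the two advertised families actually satisfy the hypothesis: a stationary sub--point process inherits the absence of descending chains from the ambient PPP, since any descending chain in the thinned configuration is also one in the PPP, while the Cox process of Example~\ref{Ex.Cox} is, conditionally on $\Xi$, a Poisson process on a region and hence admits no second-order descending chain conditionally---and therefore unconditionally---almost surely.
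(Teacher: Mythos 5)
Your proposal is correct and follows essentially the same route as the paper: isolate the absence of second-order descending chains as the only Poisson-specific input to the $\mathcal{F/F}$ argument, reuse the intensity-decay/stabilization argument for the unimodular local weak limit, invoke evaporation for the $\mathcal{I/I}$ property of the root component, and propagate it to all components via a bijective point-shift. Your added checks (the explicit halving of cyclic intensities and the verification that sub-PPPs and the conditional-Poisson Cox process satisfy the hypothesis) are consistent with, and slightly more explicit than, the paper's Remark~\ref{R013} and its appeal to Example~\ref{Ex.Cox}.
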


\begin{proof}
\textbf{(i)~Finiteness of the pre-limit graphs.}
The proof follows by repeating the argument of Theorem~\ref{FFPROPERTY} with $\Phi^0$ replaced by $N$. That proof only uses (a) stationarity and (b) the absence of (second-order) descending chains to rule out infinite paths (via the reduction to second-order chains as in Proposition~\ref{p005}). Both hypotheses hold here by assumption, hence every $f_n$-component is a.s.\ $\mathcal{F/F}$.

\medskip
\textbf{(ii)~Unimodular local weak limit.}
The proof follows by the proof of Corollary~\ref{Existence of the limit point-shift}: the vanishing intensity of cyclic points across levels implies local stabilization of the $f_n$-graphs on bounded windows, yielding a local weak limit $[G,0]$; unimodularity of the limit then follows from unimodularity of the prelimits and closedness of unimodularity under local weak limits (see~\cite{Baccelli2017}).

\medskip
\textbf{(iii)~One-endedness of the component of the root.}
This follows from adapting the proof of Corollary~\ref{IIconnected}:
under the Palm version of $N$, evaporation holds because the intensity of exit points
tends to zero as the level increases.
Hence, the descendant tree of the root is finite while the ancestral ray is infinite.
By the foil-classification theorem for unimodular networks
(\cite[Thm.~3.10]{Baccelli2017}), the rooted component is $\mathcal{I/I}$ a.s.

\medskip
\noindent
    \textbf{Upgrading to all components under a bijective point-shift.}
When a bijective point-shift exists, the distributions of $[G,o]$ and $[G,v]$
are identical for every vertex $v$ (cf.\ Proposition~\ref{Identicalcomponents} for the PPP case).
The $\mathcal{I/I}$ property then extends from the Palm root to every vertex;
so all components are $\mathcal{I/I}$ almost surely.
In particular, this condition is satisfied by the stationary Cox point process
of Example~\ref{Ex.Cox}, since the existence of a bijective point-shift for
any stationary non-equidistant point process follows from a theorem by
Timár \cite[Theorem~1]{timar2009}.
\end{proof}

\begin{figure} 
  \centering
    \includegraphics[height=.282\textheight,keepaspectratio]{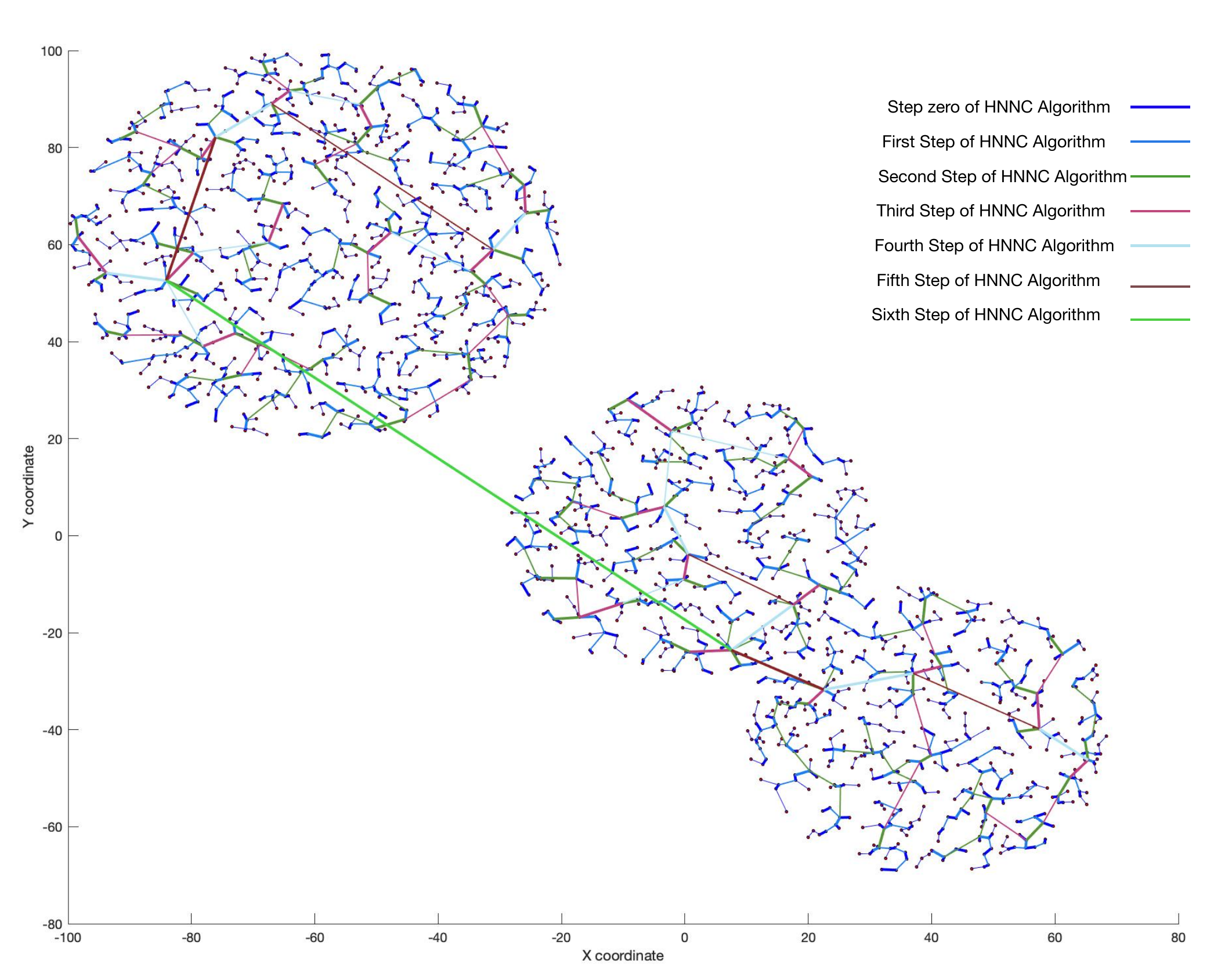}
    \includegraphics[height=.282\textheight,keepaspectratio]{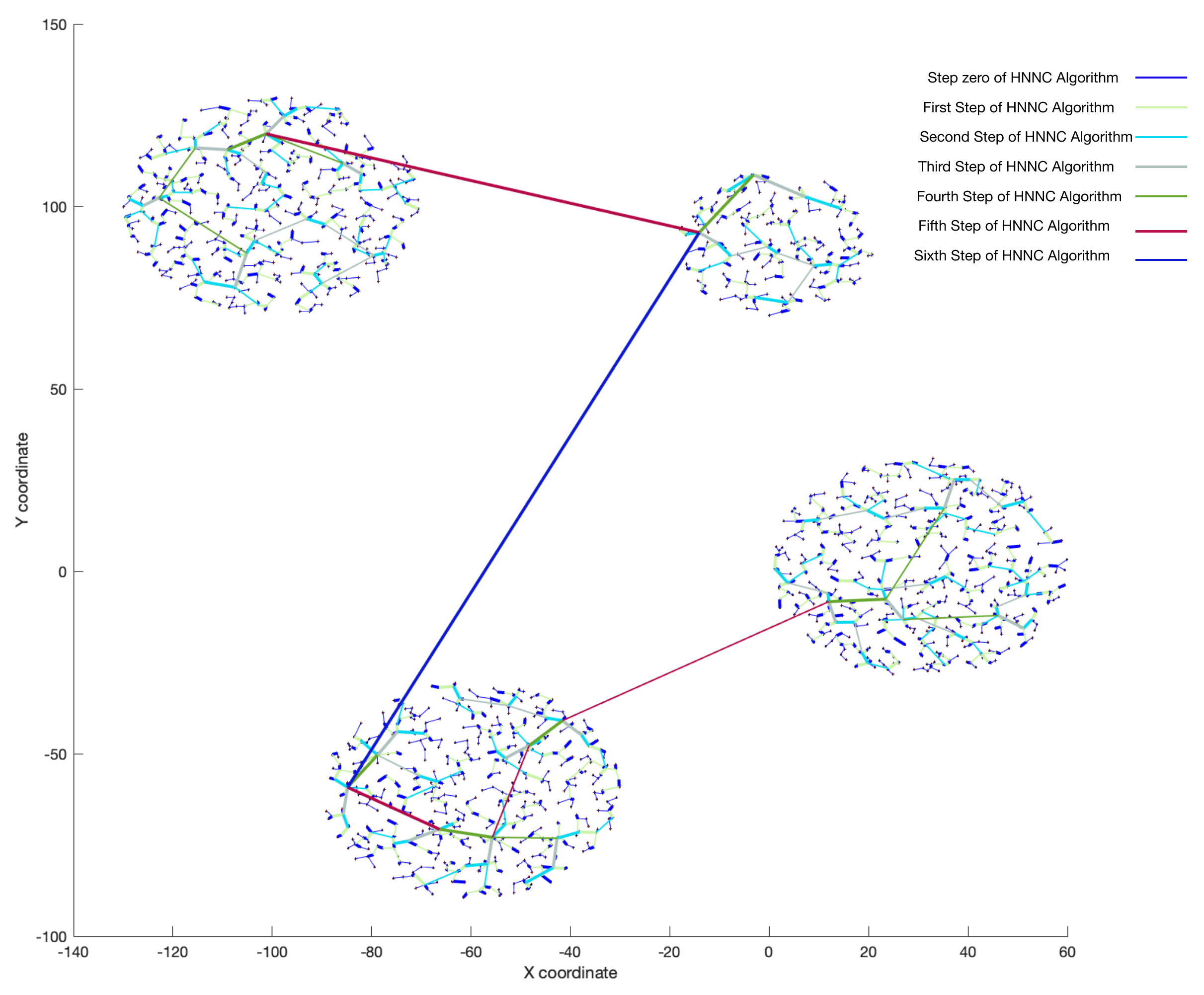}
    \caption{ CHN\(^2\) on a Cox PP realization.
    Left: A realization of the Cox PP (Example~\ref{Ex.Cox}) with \(S=\{3\) points\(\}\) and \(r=\{40,20,30\}\); \(\sim\)2000 points; merges after six steps.
    Right:
    A realization of the Cox PP (Example~\ref{Ex.Cox}) with \(S=\{4\) points\(\}\) and \(r=\{30,20,30,30\}\); \(\sim\)2500 points; merges after seven steps.}
  \label{fig:cox-vertical}
\end{figure}

\subsection{\texorpdfstring{Stopping Criteria for Detecting Data Aggregation in $\mathrm{CHN}^2$ Clustering}{Stopping Criteria for Detecting Data Aggregation in CHN2 Clustering}}
\label{simulation2}
A classical question in hierarchical clustering is that of identifying aggregates through the clustering process. It is proposed here to use stopping rules based on the lengths of the edges introduced by the clustering procedure. These stopping rules are defined relative to a Poisson dataset, which serves as a null model without intrinsic aggregation.

To this end, the mean distances between clusters at successive levels of the $\mathrm{CHN}^2$ hierarchy are computed and plotted in Figure~\ref{coxmeandistance}. The mean inter-cluster distances exhibit a pronounced increase at a specific level of the hierarchy. For the dataset shown on the left of Figure~\ref{fig:cox-vertical}, this increase occurs around level (6), whereas for the dataset shown on the right it occurs around level (5). These jumps suggest natural stopping levels for the clustering procedure and provide evidence of spatial aggregation relative to the Poisson baseline.

\begin{figure}
\centering
	\includegraphics[width=6.6cm,height=5cm]{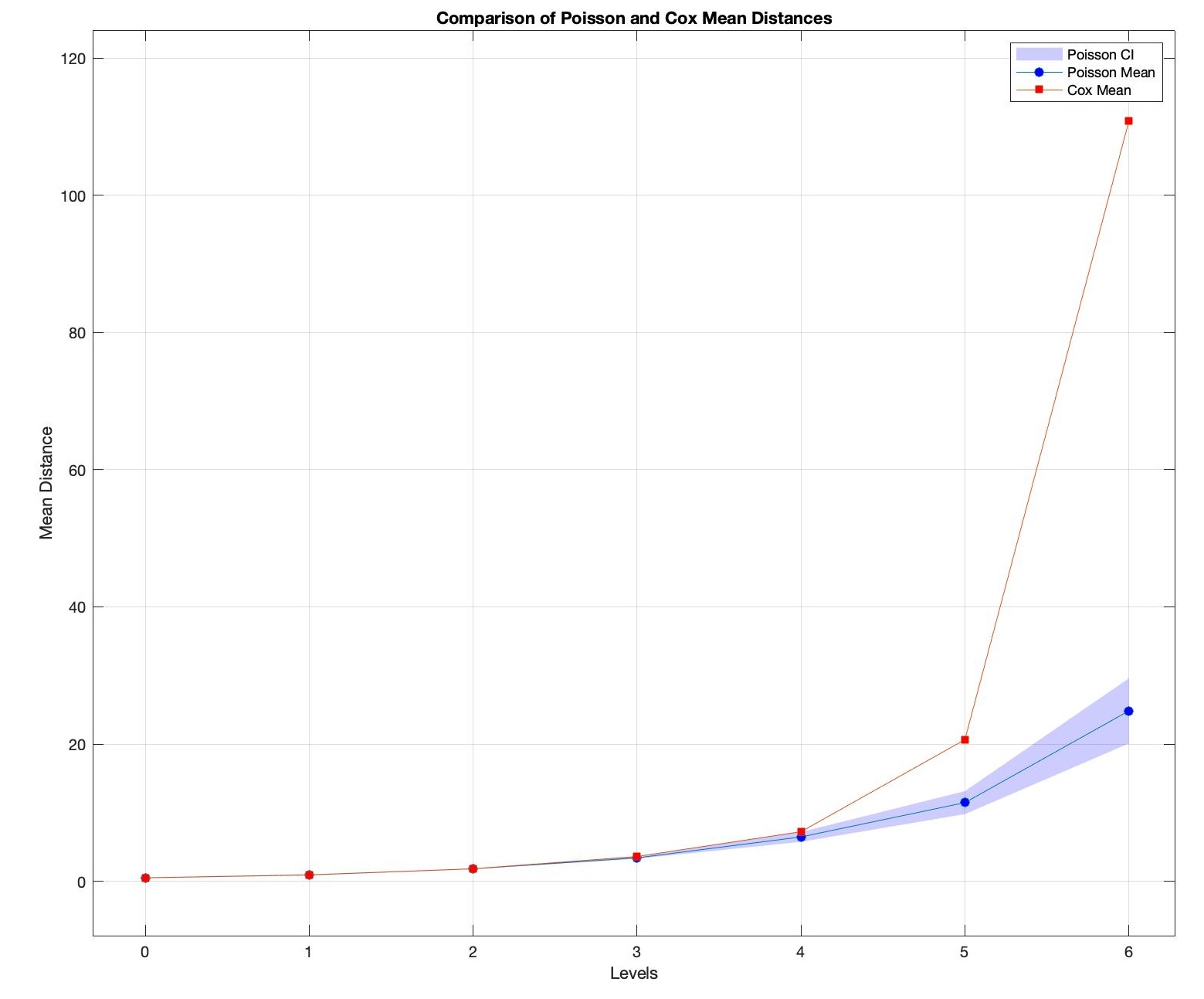} \hspace{0.1cm}
\includegraphics[width=6.6cm,height=5cm]{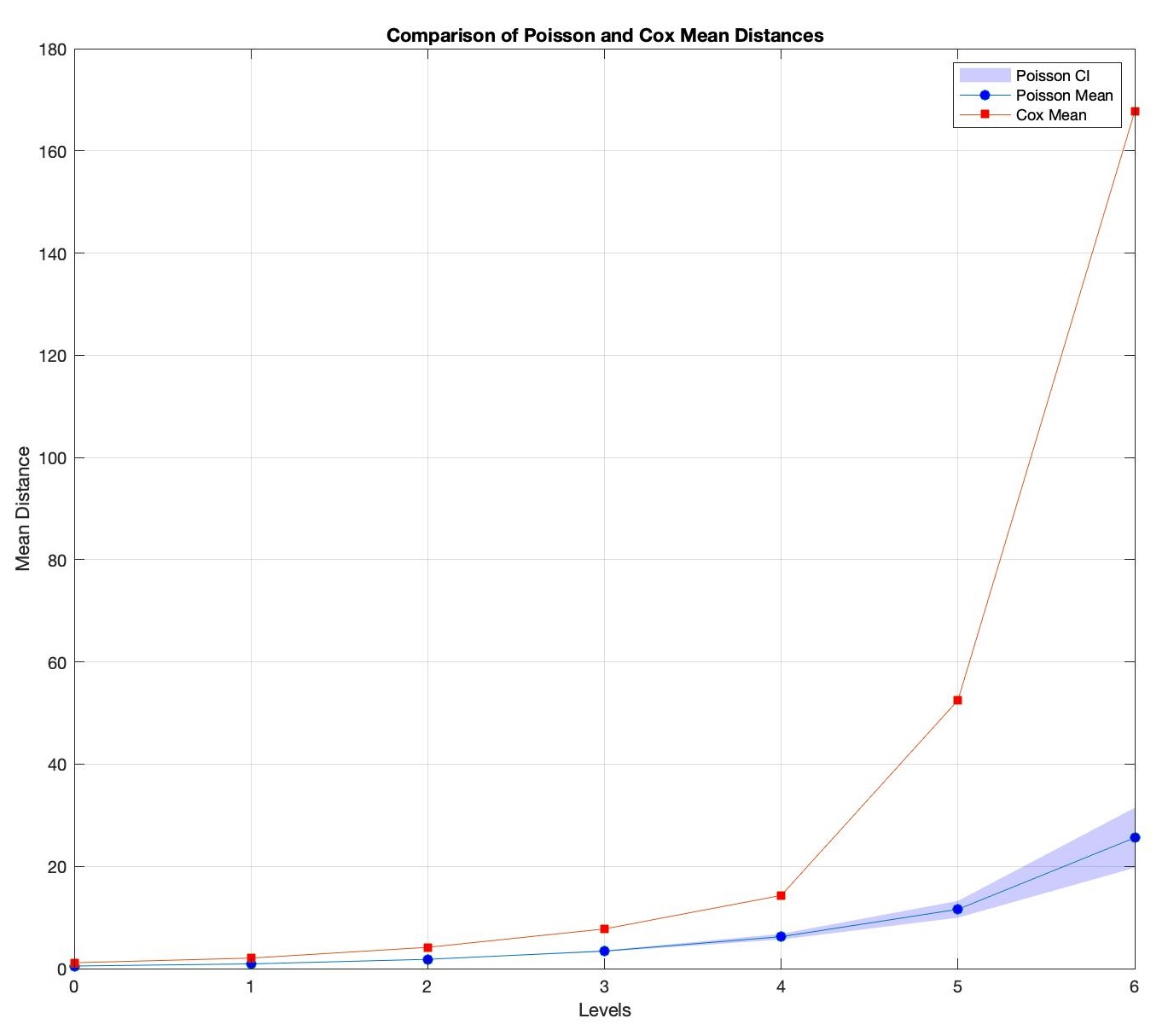}
\caption{
Mean inter-cluster distances at successive levels of the $\mathrm{CHN}^2$ hierarchy for the Cox datasets of Figure~\ref{fig:cox-vertical} and for a Poisson baseline. Left: a pronounced increase occurs around level $6$. Right: the first significant increase occurs around level $5$. These jumps suggest natural stopping levels for the hierarchical clustering procedure.
}
\label{coxmeandistance}
\end{figure}
\section{\texorpdfstring{Proof of $\mathcal{F/F}$ property of the pre-limits graphs of $\mathrm{CHN}^2$}{Proof of F/F property of the pre-limits point-shift graphs of CHN2}}
In Section~\ref{0011C} and Theorem~\ref{FFPROPERTY}, it was claimed that for every $k\in\mathbb N$, the graph associated with the point-shift $f_k$ contains only finite connected components. The purpose of this section is to prove this result.

We first establish the claim for the nearest-neighbor point-shift $f_0$. We then introduce the notion of a second-order descending chain and show that any infinite path in the $f_1$-graph would induce such a chain. Then it is proved that second-order descending chains do not exist in the homogeneous Poisson point process, it follows that all components of the $f_1$-graph are finite. Finally, the same argument is extended inductively to all $f_k$-graphs.

\label{ProofFF}
\begin{proposition}
\label{F/Fproperty}
The graph of the point-shift \(f_0\) (the \(f_0\)-graph) is unimodular, and all its connected components
belong to the \(\mathcal{F/F}\) class in the sense of \cite{Baccelli2017}; that is, each component is
almost surely finite and contains exactly one cycle. Equivalently, both the component and its foils
(equivalence classes induced by the point-shift) are a.s.\ finite.
\end{proposition}
\begin{proof}
The unimodularity of the \( f_0 \)-graph follows from the fact that it is obtained 
as a covariant shift on a stationary point process under the Palm probability 
(see \cite{Baccelli2017, Aldous}).
To see that all the connected components of this graph is finite, note that any infinite
path in the \( f_0 \)-graph on the PPP 
would form a descending chain, i.e., an infinite sequence of points whose consecutive 
distances form a decreasing sequence 
(see \cite{Meester, Matching, Daley}). 
However, it is shown in \cite{Daley} that the homogeneous PPP does not admit any 
descending chain almost surely. 
Therefore, the connected components of the \( f_0 \)-graph are almost surely finite 
and consist of directed trees attached to a single cycle connecting two mutual 
nearest neighbors of \( \Phi^0 \).
\end{proof}
For simplicity in the case $k>0$, it is first shown that the result holds for $k = 1$. Then, it will be shown that the same result holds for all $k$.
Before this, \textit{Second-order descending chains} are introduced. It is shown in Proposition \ref{004SOD} that such chains do not exist in the Poisson point process and  
in Proposition \ref{p005} that an infinite path in the $f_{1}$-graph is a second-order descending chain.
\begin{definition}
\label{017}
Let $N$ be a point process. Let $\{x_{n}\}_{n \in \mathbb{N}}$ be an infinite sequence of points in $N$ without repetition. 
This sequence is called a \textit{second-order descending chain} if, for all $i\geq 2$, the inequality
$d_{i} < \max (d_{i-1},d_{i-2})$ holds. 
\end{definition}
\begin{remark}
A \emph{descending chain} $\{x_{n}\}_{n \in \mathbb{N}}$ in a point process $N$ is defined by the property 
that for all $i \geq 1$, the inequality $d_i < d_{i-1}$ holds, where $d_i = |x_{i+1} - x_i|$ 
(see \cite{Meester} and \cite{Daley}). Any descending chain in $N$ is a second-order descending chain; 
however, the converse does not hold.
\end{remark}

\begin{proposition} 
\label{004SOD}
A $d$-dimensional Poisson point process, $\Phi$, contains no second-order descending chain a.s.
\end{proposition}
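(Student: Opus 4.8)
The plan is to prove the statement by a first--moment (Mecke) argument, reducing the nonexistence of an infinite second--order descending chain to a volume bound on finite chains that decays faster than any exponential in the length. The starting observation is a deterministic monotonicity built into the definition. Writing \(a_i := \max(d_{i-1},d_i)\), the defining inequality \(d_i < \max(d_{i-1},d_{i-2}) = a_{i-1}\) gives \(a_i = \max(d_{i-1},d_i) \le a_{i-1}\), so the consecutive--maxima sequence is non-increasing; in particular every edge of the chain has length at most \(a_1\). I would then sharpen this to the key structural fact that \(a_i\) cannot stay constant over two consecutive indices: if \(a_i = a_{i-1}\), then since \(d_i < a_{i-1}\) the maximum must be attained by \(d_{i-1}\), so \(d_{i-1} = a_{i-1}\); feeding this into the inequality at index \(i+1\), namely \(d_{i+1} < \max(d_i,d_{i-1}) = a_i\), together with \(d_i < a_{i-1}=a_i\), forces \(a_{i+1} < a_i\). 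Hence the even-- and odd--indexed subsequences \(a_{2k}\) and \(a_{2k+1}\) are each \emph{strictly} decreasing, so along any chain of length \(n\) the controlling scales strictly decrease roughly \(n/2\) times.

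Next I would reduce to chains at a bounded scale. Because \(a_i\) is non-increasing, an infinite second--order descending chain has all its edge lengths bounded by the finite (random) value \(a_1\); thus it suffices to show that, for every integer \(R\) and every unit window \(Q\), there is almost surely no infinite second--order descending chain with first point in \(Q\) and all edges of length \(\le R\), and then to take a countable union over \(R\) and over a lattice of windows covering \(\mathbb{R}^d\). Fix such \(R\) and \(Q\), and let \(\#_n\) denote the number of self-avoiding second--order descending paths \((x_0,\ldots,x_n)\) with \(x_0 \in Q\) and all edges \(\le R\). Since an infinite such chain contains a length-\(n\) prefix for every \(n\), one has \(\mathbb{P}(\exists\ \text{infinite chain}) \le \inf_n \mathbb{P}(\#_n \ge 1) \le \inf_n \mathbb{E}[\#_n]\), so it is enough to prove \(\mathbb{E}[\#_n] \to 0\).

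The core computation uses the multivariate Mecke equation (intensity \(1\)): bounding \(\#_n\) from above by dropping the self-avoidance constraint and passing to the increments \(u_i = x_{i+1}-x_i\), one obtains \(\mathbb{E}[\#_n] \le |Q| \int \mathbf{1}\{\,|u_0|,|u_1|\le R,\ |u_i| < \max(|u_{i-1}|,|u_{i-2}|)\ \forall i\ge 2\,\}\,\prod_i du_i\). Placing each successive increment in a ball of radius \(a_{i-1}\) contributes a factor \(\omega_d\, a_{i-1}^{\,d}\), and grouping the increments into consecutive pairs controlled by the strictly decreasing scales \(r_k := a_{2k}\) turns the remaining integral into an \emph{ordered} radial integral of the symmetric integrand \(\prod_k r_k^{2d}\) over the region \(R \ge r_1 > r_2 > \cdots\). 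The ordering constraint supplies a factor \(1/\lfloor n/2\rfloor!\), while the angular integrations and the constants \(\omega_d\) contribute only \(C^n R^{dn}\). Thus \(\mathbb{E}[\#_n] \le C^n R^{dn}/\lfloor n/2\rfloor! \to 0\), which closes the argument.

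The main obstacle is precisely this core computation. The second--order condition is genuinely weaker than the descending--chain condition used in \cite{Daley}, where the single distance is strictly decreasing and the factorial decay is immediate; here one must first isolate the every-other-step strict decrease of the maxima \(a_i\) and then organize the Mecke integral so that this weaker monotonicity still yields a super-exponential (factorial) decay rather than a merely exponential bound. The delicate points are the bookkeeping that converts the nested ball constraints into a symmetric ordered radial integral, and the control of the first-edge scale via the monotonicity of \(a_i\), which is what removes the divergence coming from arbitrarily long initial edges and makes each \(\mathbb{E}[\#_n]\) finite to begin with.
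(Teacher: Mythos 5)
Your proposal is correct and follows essentially the same route as the paper: a first-moment (Mecke/Campbell--Slivnyak) bound on the expected number of length-$n$ second-order descending chains with $\max(d_0,d_1)<R$, with the factorial decay coming from the fact that the running maxima $\max(d_{i-1},d_i)$ strictly decrease every two steps. The only difference is how the factor $1/\lfloor n/2\rfloor!$ is extracted -- you symmetrize an ordered integral over the pair-maxima directly, whereas the paper sets up a two-step recursion for $\mathbb{E}[X_{R,n}]$ and evaluates it exactly by induction (Lemma~\ref{p100}); both yield the same bound $\bigl(\omega_d^2R^{2d}\bigr)^{\lfloor n/2\rfloor}/\lfloor n/2\rfloor!$ up to constants.
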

\begin{proof}
By classical arguments, it is enough to show that if $\Phi$ is a homogeneous PPP,
the probability of having a second-order 
descending chain in $\Phi +\delta_0$, starting from $0$, is zero. 
Consider the following events:
\begin{equation*}
A=\{ \text{ there exists a second-order chain, $\{x_{n}\}_{n \in \mathbb{N}}$, in $\Phi +\delta_{0}$, with $x_{0}=0$} \},
\end{equation*}

\begin{equation*}
A_R = \left\{
\begin{array}{l}
\text{there exists a second-order chain } \{x_n\}_{n \in \mathbb{N}} \text{ in } \Phi + \delta_0, \text{ with } x_0 = 0, \\
\text{such that } d_0, d_1 < R
\end{array}
\right\}.
\end{equation*}
where $R$ is an integer.
Since $A=\cup _{R=1}^{\infty} A_{R}$, it is enough to show that $\mathbb{P}(A_{R})=0$ for all $R$. For $n\ge 3$,
let $A_{R,n}$ be the event of existence of a second-order descending chain, $\{x_i\}_{i=0}^{n}$  
of length (number of edges) $n$, that is, a chain in $\Phi +\delta_{0}$, with $x_{0}=0$, satisfying the property 
$d_{i} < \max (d_{i-1},d_{i-2})$ for all $2\le i\le n-1$, and such that $d_{0},d_{1}<R$. 
Then one can write $A_{R}=\cap_{n\geq 3} A_{R,n}$. Since for all $i$, $A_{R,i}\supseteq A_{R,i+1} $, so $\mathbb{P} (A_{R})= \lim _{n \to \infty} \mathbb{P} ( A_{R,n}).$
Thus it is enough to show that $\lim _{n \to \infty} \mathbb{P} ( A_{R,n})=0$.

For $n\ge 3$, let $X_{R,n}$ denote the number of second-order descending chains with length $n$
in $\Phi+\delta_{0}$ with $x_{0}=0$ and with $d_{0},d_{1}<R$.  
Since $\mathbb{P}(A_{R,n}) \leq \mathbb{E}[X_{R,n}]$, it is enough to show that 
$\lim_{n \to \infty} \mathbb{E}[X_{R,n}]=0$. 

For $n\ge 1$, consider the function $g_{n+2}(x,\Phi)$ equal to the number of second-order descending chains with 
length $n+2$, $x_{0}=0$, and $x_{1}=x\ne 0$, under the assumption that 0 and $x$ are in the support of $\Phi$. Then 
\begin{align*}
	\mathbb{E}[X_{R,n+2}] & = \mathbb{E}  	[\, \sum _{x \in \mbox{supp}(\Phi) \cap B_{R}(0)} g_{n+2}\left(x,\Phi+\delta_{0}\right) ]\, \\
	& =  \lambda \int _{B_{R}(0)} \mathbb{E} [\, g_{n+2}\left(x, \Phi +\delta_{0}+\delta_x\right) ]\, {\mathrm d}x , 
\end{align*} 
where the last equality follows from Campbell's formula and Slivnyak's theorem. 
For all $\Phi$ with a support containing $\{0\}$ and $\{x\}$, 
consider the function $g^{\prime}_{n+2}(y,\Phi)$ equal to 
the number of second-order descending chains with length $n+2$, without repeated vertices
and such that $x_{0}=0$, $x_{1}=x\ne 0$, and $x_{2}=y$, with $y\ne 0$ and $y\ne x$,
when assuming that $y$ is in the support of $\Phi$. By the same arguments as above, 
\begin{align*}
\mathbb{E}[X_{R,n+2}] & =  \lambda \int _{B_{R}(0)} \mathbb{E} [\, \sum _{y \in B_{R}(x) 
	\cap\mbox{supp}(\Phi), y\ne 0, y\ne x} g^{\prime}_{n+2}\left(y,\Phi+\delta_0+\delta_x\right) ]\, {\mathrm d}x \\
	&= \lambda \int _{B_{R}(0)} \lambda \int _{B_{R}(x)}  \mathbb{E} [\, g^{\prime}_{n+2} \left(y,\Phi +\delta_0+\delta_x+\delta_y\right) ]\, {\mathrm d}y {\mathrm d}x. 
\end{align*} 
The definition of second-order descending chains, together with the fact that we consider chains 
without repeated vertices and the stationarity of $\Phi$, imply that the last equation is equal to 
 \begin{equation*}
	 \lambda^{2} \int_{B_{R}(0)}   \int_{B_{R}(x)}\mathbb{E}[\, X_{\max(\lVert x \rVert,\lVert y-x \rVert),n}]\, {\mathrm d}y {\mathrm d}x. 
 \end{equation*}
So the following recurrence equation holds for the events $X_{R,n}$: 
\begin{equation}
\label{Recursive}
\mathbb{E}[X_{R,n+2}]= \lambda^{2} \int_{B_{R}(0)}  
	\int_{B_{R}(x)}\mathbb{E}[\, X_{\max(\lVert x \rVert,\lVert y-x \rVert),n}]\, {\mathrm d}y {\mathrm d}x. 
\end{equation}
Using this recursive equation, it will be shown in Lemma \ref{p002} that equation \eqref{Number} holds for $\mathbb{E}[X_{R,n}]$, so when $n$ goes to infinity, the number of descending chain of length $n$ in the Poisson point process goes to zero.
\end{proof}
\begin{lemma}
\label{p100}
Let $\Phi$ be a $d$-dimensional Poisson point process of intensity $\lambda$, and
	let $X_{R,n}$ be the number of second-order descending chains of length $n$ with $\max(d_0,d_1)<R$.
Then for all $n \in \mathbb{N}$,
\begin{equation}
\label{Number}
\mathbb{E}[X_{R,n}]
=
\frac{\bigl(\lambda^{2}\,\omega_{d}^{2}\,R^{2d}\bigr)^{\lfloor n/2\rfloor}}
     {\lfloor n/2\rfloor!}
\;\times\;
\bigl(\lambda\,\omega_{d}\,R^{d}\bigr)^{\,n \bmod 2},
\end{equation}
where $\omega_{d}$ is the volume of the unit $d$-dimensional ball.
\end{lemma}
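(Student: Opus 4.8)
The plan is to establish \eqref{Number} by induction on the length $n$ in steps of two, using the recursion \eqref{Recursive} as the engine and the two shortest chains as base cases. I would first read off the base cases from the Campbell--Mecke formula: a length-$0$ chain is the rooted point alone, so $\mathbb{E}[X_{R,0}]=1$; a length-$1$ chain is any point of $\Phi$ in $B_R(0)$, so $\mathbb{E}[X_{R,1}]=\lambda\omega_d R^d$; and a length-$2$ chain carries no descending constraint (the inequality $d_i<\max(d_{i-1},d_{i-2})$ is vacuous for $i\le 1$), so $\mathbb{E}[X_{R,2}]=\lambda^2\omega_d^2 R^{2d}$. All three agree with \eqref{Number}, seeding both parities.

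The structural claim I would carry through the induction is that $\mathbb{E}[X_{R,n}]$ is a pure power $c_n R^{dn}$ of the window radius. Assuming $\mathbb{E}[X_{r,n}]=c_n r^{dn}$, I substitute into \eqref{Recursive}, change variables $v=y-x$ to decouple the two balls, and rescale $x=R\tilde x$, $v=R\tilde v$. This extracts the factor $R^{d(n+2)}$ and reduces the step to the dimensionless master integral
\[
J_n=\int_{\|\tilde x\|<1}\int_{\|\tilde v\|<1}\max(\|\tilde x\|,\|\tilde v\|)^{dn}\,d\tilde v\,d\tilde x,
\]
yielding $c_{n+2}=\lambda^2 J_n c_n$ and propagating the power-law form.

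The computational heart is the evaluation of $J_n$. Exploiting the symmetry $\tilde x\leftrightarrow\tilde v$, I would write $J_n=2\int_{\|\tilde x\|<1}\|\tilde x\|^{dn}\,\mathrm{vol}\{\|\tilde v\|\le\|\tilde x\|\}\,d\tilde x$, replace the inner volume by $\omega_d\|\tilde x\|^{d}$, and integrate in polar coordinates to get $J_n=\tfrac{2\omega_d^2}{n+2}$. Substituting back gives the clean two-step recursion $c_{n+2}=\tfrac{2\lambda^2\omega_d^2}{n+2}\,c_n$, which I then solve along the even and odd subsequences and compare with \eqref{Number}.

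I expect the main obstacle to be the parity bookkeeping hidden in $\lfloor n/2\rfloor$ and $n\bmod 2$ at this last stage. Along the even subsequence the denominators are $2,4,\dots,2m$ and the recursion telescopes exactly to $c_{2m}=(\lambda^2\omega_d^2)^m/m!$, matching \eqref{Number}. Along the odd subsequence the denominators become the odd integers $3,5,\dots,2m+1$, so a double factorial appears and the constant must be tracked carefully; this is also the place to verify that the single parameter $\max(\|x\|,\|y-x\|)$ in \eqref{Recursive} faithfully encodes the descending constraint across the junction (the condition $d_3<\max(d_2,d_1)$), which is what ultimately pins down the odd-length constant. In all cases the expression decays like $1/\lfloor n/2\rfloor!$, so once the constants are settled the limit $\mathbb{E}[X_{R,n}]\to0$ required in Proposition~\ref{004SOD} follows from the convergence of the exponential series.
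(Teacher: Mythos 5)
Your approach is essentially the paper's: a two-track induction (even and odd $n$) driven by the recursion \eqref{Recursive}, seeded at $n=0$ and $n=1$, with the inductive step reduced to evaluating $\int_{B_R(0)}\int_{B_R(x)}\max(\lVert x\rVert,\lVert y-x\rVert)^{nd}\,\mathrm{d}y\,\mathrm{d}x$. Your evaluation of this kernel by symmetrizing and rescaling to the dimensionless integral $J_n=2\omega_d^2/(n+2)$ is a cleaner route to the same constant that the paper obtains via two nested polar integrations; both yield the identical two-step recursion $c_{n+2}=\frac{2\lambda^2\omega_d^2}{n+2}\,c_n$ for the coefficient in $\mathbb{E}[X_{R,n}]=c_nR^{nd}$.

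The point you flag at the end is not mere bookkeeping: it exposes a genuine discrepancy with the stated formula. On the even track the recursion telescopes to $c_{2m}=(\lambda^2\omega_d^2)^m/m!$, matching \eqref{Number}. On the odd track, $c_1=\lambda\omega_d$ and the denominators $3,5,\dots,2m+1$ give $c_{2m+1}=\lambda\omega_d\,(2\lambda^2\omega_d^2)^m/\bigl(3\cdot5\cdots(2m+1)\bigr)$, whereas \eqref{Number} asserts $\lambda\omega_d(\lambda^2\omega_d^2)^m/m!$; these already disagree at $n=3$ ($\tfrac{2}{3}\lambda^3\omega_d^3$ versus $\lambda^3\omega_d^3$). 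The paper's proof dismisses the odd case as ``identical to the even case,'' but the arithmetic $\frac{2}{n+2}\cdot\frac{1}{\lfloor n/2\rfloor!}=\frac{1}{\lfloor(n+2)/2\rfloor!}$ only closes when $n$ is even. Since $3\cdot5\cdots(2m+1)>2^m m!$, your double-factorial value is \emph{smaller} than the right-hand side of \eqref{Number}, so the lemma survives as an upper bound and Proposition~\ref{004SOD} (which only needs $\mathbb{E}[X_{R,n}]\to0$) is unaffected. Your other caveat is also well taken: the junction constraint $d_3<\max(d_2,d_1)$ is strictly stronger than the relaxed constraint $d_3<\max(d_0,d_1)$ that \eqref{Recursive} encodes, so the recursion is itself only an inequality ``$\le$'' rather than an equality --- again harmless for the intended application, but when you write the argument up you should replace the equalities by upper bounds throughout.
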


\begin{proof}
The proof is by induction on $n$.
Since the recurrence relation \eqref{Recursive} increases $n$ by $2$ at each step,
there are two separate induction "tracks": one starting from $n=0,2,4,\dots$ and the other starting from $n=1,3, 5, \dots$. 

For even numbers the base case is $n=0$. $\mathbb{E}[X_{R,0}]$ is the number of second-order descending chains 
with length $0$ in $\Phi \cup {0}$, with $x_{0}=0$, which is equal to 1, so the base case holds for the even numbers. 
Assume that \eqref{Number} holds for an even number $n$. It then follows from \eqref{Recursive} that
\begin{align}
\label{p002}
	\mathbb{E}[X_{R,n+2}]&= \lambda^{2} \int_{B_{R}(0)}   \int_{B_{R}(x)}\mathbb{E}[\, X_{max(\lVert x \rVert,\lVert y-x \rVert),n}]\, {\mathrm d}y {\mathrm d}x \nonumber \\
	&=\frac{\lambda^{n+2}\omega_{d}^{n}}{n/2 !}  \int_{B_{R}(0)}  \int_{B_{R}(x)} (\max(\lVert x \rVert,\lVert y-x \rVert))^ {nd}{\mathrm d}y {\mathrm d}x .
\end{align}
The inner integral is equal to 
\begin{align}
\label{p003}
	\int_{B_{R}(x)} (\max(\lVert x \rVert,\lVert y-x \rVert))^ {nd} {\mathrm d}y 
	&= \int_{\theta \in {\mathbb S}^{d-1}} \int_{0}^{R} (\max(\lVert x \rVert, r))^ {nd} r ^{d-1} dr d\theta \nonumber  \\
&= \int_{ \theta \in {\mathbb S}^{d-1}} \left( \, \int_{0}^{\lVert x \rVert} \lVert x \rVert^ {nd} r ^{d-1} dr + \int_{\lVert x \rVert}^{R} r ^{(n+1)d-1} dr \right)\, d\theta \nonumber   \\
&= \frac{\omega_{d}}{n+1} (\, R^{(n+1)d}+ n\lVert x \rVert^{(n+1)d} ), 
\end{align}
using the change of variables \(y-x=r\theta\), where \(r\ge0\),  \(\theta\in S^{d-1}\) and ${\mathbb S}^{d-1}$ denotes the $d$-sphere.
Equation \eqref{p002} together with  \eqref{p003} give
\begin{align*}
	\mathbb{E}[X_{R,n+2}]&= \frac{\lambda^{n+2}\omega_{d}^{n+1}}{(n+1)(n/2 !)}  \int_{B_{R}(0)}   (\, R^{(n+1)d}+ n\lVert x \rVert^{(n+1)d} ) {\mathrm d}x 
=  \frac{\lambda^{n+2}\omega_{d}^{n+2}}{\frac{n+2}{2} !} R^{(n+2)d}, 
\end{align*}
which completes the proof for even numbers.

\textcolor{black}{For odd \(n\), the base case is \(n=1\). Observe that
\(\mathbb{E}[X_{R,1}]\), the expected number of second-order descending chains of length \(1\),
is \(\lambda\,\omega_d\,R^d\). This matches \eqref{Number} because \(\lfloor 1/2\rfloor = 0\)
and \((1 \bmod 2) = 1\). Then, by applying the same recurrence relation \eqref{Recursive},
which again advances indices in steps of two, one proves the result for all odd \(n=1,3,5,\dots\).
Hence the inductive argument for odd \(n\) is identical to the even case,
simply starting from \(n=1\) instead of \(n=0\).}
\end{proof}
\begin{definition} The definition of $f_{1}$ implies that the $f_{1}$-graph, as a directed graph, 
contains two distinct types of edges on $\Phi_{c}^{0}$. The first is that of edges between the nearest 
point w.r.t. $\mathfrak d$, referred to as $\mathfrak d$-type edges of level 0. 
The other type is that of edges that connect an exit point, $x$, of level $0$ to ${\mathrm {NN0}}(x)$. 
These edges are referred to as $\delta$-type edges of level $1$. That is, edges inherited from the nearest-neighbor graph are  $\mathfrak d$-type edges, whereas edges connecting cluster representatives are $\delta$-type edges.
\end{definition}
\begin{proposition}
\label{p005}
Any infinite path in the $f_{1}$-graph is a second-order descending chain. 
\end{proposition}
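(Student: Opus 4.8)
The plan is to walk along a path of the $f^1$-graph \emph{in the flow direction} of the point-shift and read off the consecutive edge-lengths, showing that the two extremal properties built into the construction force the second-order descent $d_i<\max(d_{i-1},d_{i-2})$ of Definition~\ref{017}. Two facts carry the whole argument. First, a $d$-type edge leaving a point $x$ realizes the Euclidean nearest neighbour of $x$, so its length is at most $\mathfrak{d}(x,z)$ for every other point $z$ of $\Phi^0$. Second, a $\delta$-type edge leaving a $0$-cycle $C$ through its exit point realizes $\delta(C,\mathrm{NN1}(C))$, so its length is at most $\delta(C,C')$ for every other $0$-cycle $C'$.

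Since every non-exit point is sent to its Euclidean nearest neighbour and only exit points are redirected, the cluster subtrees of order $0$ are finite appendages (each order-$0$ cluster is finite by Proposition~\ref{F/Fproperty}); an infinite path therefore meets infinitely many $0$-cycles and its infinite part is carried by the cluster heads $\Phi^0_c$. First I would describe this reduced path: it enters a cycle $C_i$ at the point $p_i$ reached by the incoming jump, crosses the internal mutual-nearest-neighbour edge to the exit point $q_i$, and leaves through the jump $q_i\mapsto\mathrm{nn1}(q_i)$. Writing $m_i$ for the mutual-nearest-neighbour length inside $C_i$ and $s_i=\delta(C_i,C_{i+1})$ for the jump leaving $C_i$, the consecutive lengths along the path are $\dots,m_i,s_i,m_{i+1},s_{i+1},\dots$.

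Then the two minimality properties give the decisive inequalities. For a jump, $C_{i+1}=\mathrm{NN1}(C_i)$ is the nearest cycle to $C_i$, so $s_i=\delta(C_i,C_{i+1})\le\delta(C_i,C_{i-1})=s_{i-1}$, and since $s_{i-1}$ is the length two steps earlier this gives $s_i<\max(m_i,s_{i-1})$. For a mutual-nearest-neighbour length, the entry point $p_{i+1}$ has its nearest neighbour inside $C_{i+1}$ at distance $m_{i+1}$, while the point $q_i$ it was reached from lies at distance $s_i$, so $m_{i+1}\le s_i<\max(s_i,m_i)$. Together these are exactly $d_i<\max(d_{i-1},d_{i-2})$. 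I would stress that order two is unavoidable: a jump out of a cycle is always longer than that cycle's own mutual-nearest-neighbour edge (reaching another cluster costs more than reaching one's own partner), so $s_i>m_i$ and the naive first-order comparison against the immediately preceding edge fails; the jump is dominated only by the previous jump, two steps back.

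The main obstacle, and the step I would treat most carefully, is \emph{orientation}. Each vertex has a single image under $f^1$, so an infinite path is forced to run along the flow for a while and then against it, and the extremal inequalities above only point the right way when the path is read along the flow of the induced nearest-neighbour dynamics on cycles. I would resolve this by absorbing the finite excursions through $\Phi^0_a$, by checking the two boundary variants at each cycle (entry point equal to or distinct from the exit point, the former being the easier case with no intervening $m_i$), and by extracting from any infinite path arbitrarily long flow-directed segments of the interleaved form above. Each such segment is a second-order descending chain, so the first-moment estimate behind Proposition~\ref{004SOD} (the vanishing expected number of long chains) excludes it, and hence excludes the infinite path.
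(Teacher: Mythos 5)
Your proof is correct and follows essentially the same route as the paper's: classify each edge of the infinite path (once it lives on the cluster heads) as an internal mutual-nearest-neighbour edge or an exit-point jump, and combine the NN0 minimality of head points with the NN1 minimality between $0$-cycles to obtain $d_i<\max(d_{i-1},d_{i-2})$ — your interleaved $(m_i,s_i)$ bookkeeping is just a cleaner packaging of the paper's five-case analysis on the types of $e_i,e_{i-1},e_{i-2}$. Your added discussion of orientation and the closing appeal to Proposition~\ref{004SOD} go beyond the literal statement (which asserts only the implication, not the non-existence) and address a point the paper's proof passes over; just note that the arbitrarily long flow-directed segments extracted from a backward ray share a \emph{terminal} rather than an initial point, so the first-moment bound must be applied to chains counted from their endpoint (or via a reversal/mass-transport argument) rather than quoting $\mathbb{E}[X_{R,n}]\to 0$ verbatim.
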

 \begin{proof}
Assume that the $f_{1}$-graph has an infinite component. 
Since the $f_0$-graph has all its components finite (Proposition \ref{F/Fproperty}),
there must be an infinite path $\mathcal{P}$ on \(\{S^0_i\}_i\) (see Definition \ref{Exit0}).
Let $e_{i}$ be the $i$-th edge in $\mathcal{P}$. Then $e_{i}$ can be either of $\mathfrak d$ or $\delta$ type. 
If $e_{i}$ is of the $\mathfrak d$-type, then, there are two possibilities for $e_{i-1}$ and $e_{i-2}$:
\begin{enumerate}
\item $e_{i-1}$ and $e_{i-2}$ are both $\delta$-type edges (see Figure \ref{F5}.a).  
In this case, since $e_{i}$ is a MNN0 edge,
then $\|e_{i}\|<\|e_{i-1}\|$. Thus $\|e_{i}\|< \max (\|e_{i-1}\|, \|e_{i-2}\|)$. 
\item $e_{i-1}$ is a $\delta $-type edge and $e_{i-2}$ is a $\mathfrak d$-type edge 
(see Figure \ref{F5}.b). Since $e_{i-2}$ and $e_{i}$ are both edges between MNN0 points, this gives $\|e_{i-1} \|> \|e_{i}$ which implies that $ \|e_{i}\|< \|e_{i-1} \|= \max (\|e_{i-1}\|, \|e_{i-2}\|)$.
\begin{figure}
\centering
\includegraphics[width=.7\textwidth]{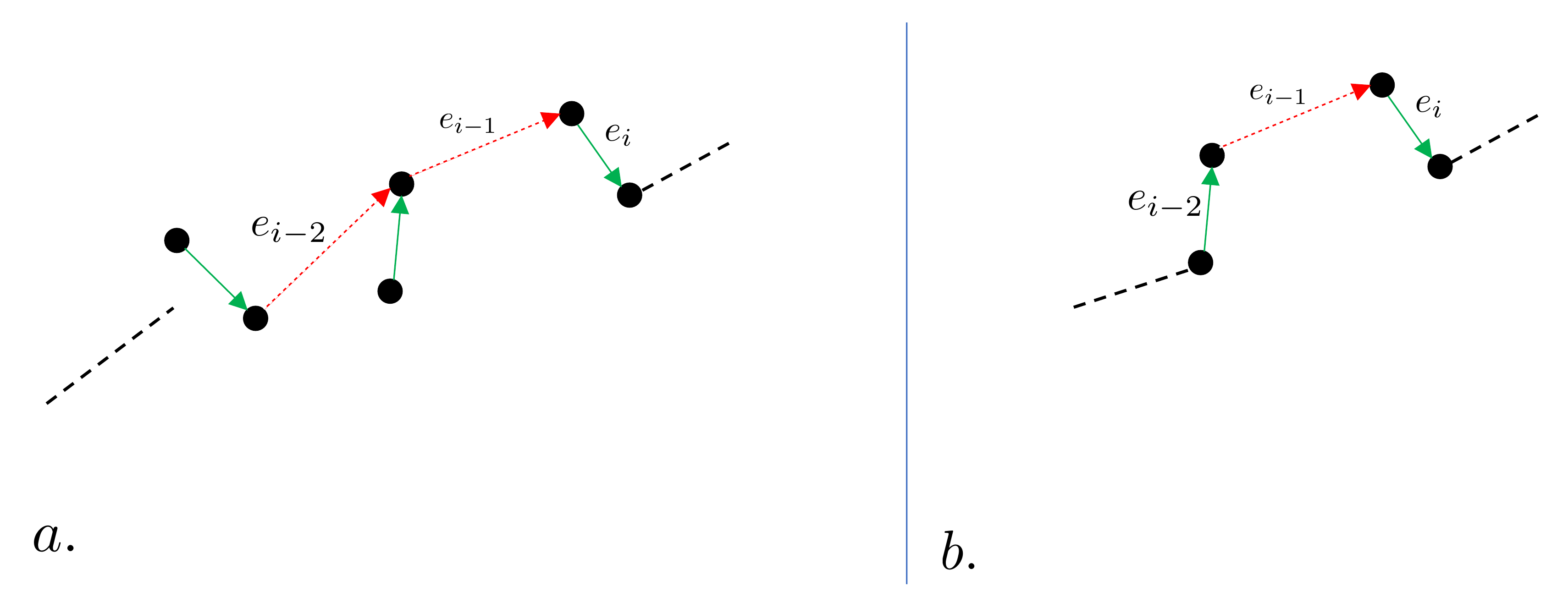}
\caption{ Different possibilities for $e_{i-1}$ and $e_{i-2}$ conditioned on the fact that $e_{i}$ is a $\mathfrak d$-type 
edge in the infinite path of the $f_{1}$-graph, if it exists. Solid (green) arrows represent $\mathfrak d$-type edges, whereas dashed (red) arrows represent $\delta$-type edges.}
 \label{F5}
\end{figure}
 \end{enumerate}
If $e_{i}$ is a  $\delta$-type edge, then there are three possibilities for $e_{i-1}$ and $e_{i-2}$:
\begin{enumerate}
\item both are $\delta$-type edges (see Figure \ref{F6}.a). In this case,  
$\|e_{i}\|< \max (\|e_{i-1}\|,\allowbreak \|e_{i-2}\|)$ since these edges are in a descending chain in \(\{S^0_i\}_i\).
\item $e_{i-1}$ is a $\mathfrak d$-type edge and $e_{i-2}$ is a $\delta$-type edge (see Figure \ref{F6}.b). Then
$\|e_{i-2}\| > \|e_{i}\|$ where the first inequality follows from the descending chain property. This implies that $\| e_{i}\|< \max (\|e_{i-1}\|, \|e_{i-2}\|)=\|e_{i-2}\|$. 
\item $e_{i-1}$ is a $\delta$-type edge and $e_{i-2}$ is a $\mathfrak d$-type edge 
(see Figure~\ref{F6}.c). 
The edges $e_{i-1}$ and $e_i$ are consecutive $\delta$-type edges in the descending chain on 
\(\{S_i^0\}_i\). Hence, $\|e_i\| < \|e_{i-1}\|$. Therefore, $\|e_i\| < \max\bigl(\|e_{i-1}\|,\|e_{i-2}\|\bigr).$
\end{enumerate}
This completes the proof of Proposition \ref{p005}.
\begin{figure}
\centering
\includegraphics[width=.9\textwidth]{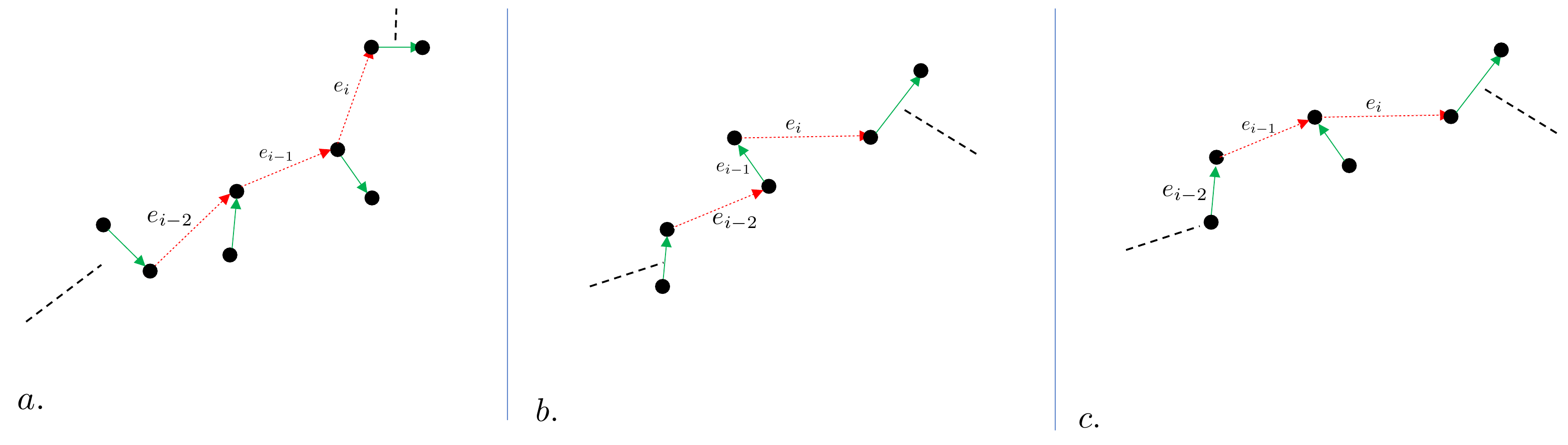}
\caption{ Different possibilities for $e_{i-1}$ and $e_{i-2}$ conditioned on the event that $e_{i}$ is a $\delta$-type edge in the infinite path of the $f_{1}$-graph if it exists. Solid (green) arrows represent $\mathfrak d$-type edges, whereas dashed (red) arrows represent $\delta$-type edges.}
 \label{F6}
\end{figure}
\end{proof}
Propositions \ref{004SOD} and  \ref{p005} give the following result. 
\begin{corollary}
\label{006}
If $\Phi^{0}$ is a Poisson point process, then  
the components of $f_{1}$-graph of $\Phi^{0}$ are all a.s. finite.
\end{corollary}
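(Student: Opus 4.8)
The plan is to obtain Corollary~\ref{006} by chaining Propositions~\ref{004SOD} and~\ref{p005} through a short argument by contradiction. First I would suppose, toward a contradiction, that with positive probability the $f^1$-graph on $\Phi^0$ has an infinite connected component. Since $f^1$ is a point-shift, every vertex has out-degree one, so each component is a collection of directed trees feeding into a unique cycle. By Proposition~\ref{F/Fproperty} the $f^0$-graph has only finite components, and $f^1$ differs from $f^0$ only through the $\delta$-type (NN1) edges joining distinct $0$-cycles; hence an infinite component must traverse infinitely many clustroid pairs $\{S^0_i\}_i$, producing an infinite path $\mathcal{P}$ of exactly the form analyzed in Proposition~\ref{p005}.

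Next I would apply Proposition~\ref{p005}, which shows that such an infinite path is necessarily a second-order descending chain. But Proposition~\ref{004SOD} asserts that a homogeneous PPP admits no second-order descending chain almost surely. This contradiction forces the event of an infinite component to have probability zero, so all connected components of the $f^1$-graph are almost surely finite, which is the claim.

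The only genuinely substantive step is the extraction of the infinite path from an infinite component: one must certify that an infinite component really contains an infinite self-avoiding ray rather than some path-free infinite structure. This needs the in-trees to be locally finite, so that an infinite ray can be produced (for instance by K\"onig's lemma); local finiteness holds because, in $\mathbb{R}^d$, the number of points sharing a common nearest neighbor is bounded by a dimension-dependent constant (a kissing-number bound), which forces bounded in-degrees. I expect this to be the main obstacle in principle, but it is already carried out inside the proof of Proposition~\ref{p005}, so the corollary itself reduces to assembling the two propositions and no further difficulty should arise.
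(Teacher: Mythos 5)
Your proposal is correct and follows essentially the same route as the paper, which derives Corollary~\ref{006} directly by combining Proposition~\ref{p005} (any infinite path in the $f^1$-graph is a second-order descending chain) with Proposition~\ref{004SOD} (the PPP admits no such chain a.s.). Your additional remark on extracting an infinite ray via local finiteness is a sensible precaution, but as you note it is already subsumed in the structure argument inside the proof of Proposition~\ref{p005}, so no new difficulty arises.
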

So far, it has been shown that there is no infinite path or cluster in the $f_{1}$-graph. 
The following proposition establishes that this holds true for all $f_{n}$-graphs.
\begin{proposition}
\label{008}
For all $n \in \mathbb{N}$, any infinite path in $f_{n}$-graph is a second-order descending chain. As a result the components of $f_{n}$-graphs of $\Phi^{0}$,for all  $n \in \mathbb{N}$, are all a.s finite.
\end{proposition} 

\begin{proof}
The proof is by induction. Any infinite path in the $f_{0}$-graph is a descending chain in the PPP. 
Therefore, the base case of the induction is true. For the induction step, suppose there is no infinite path 
(cluster) in the $f_n$-graph. Then, if an infinite path $P^{n+1}$ exists in the $f_{n+1}$-graph, 
this path lies on \(\{S^{n+1}_i\}_i\) eventually. Thus, there are two types of edges on $P^{n}$: 
the first type is that between two ${\mathrm{MNNn}}$ points, and the second type is that of edges that 
connect an exit point of level $n$, $x$, to ${\mathrm {NN(n+1)}}(x)$. The construction of the $f_{n+1}$ point-shift
shows that $P^{n+1}$ forms a descending chain between the $f_{n}$-cycles $\{S_{n}^{i}\}$. 
The same proof as in Proposition \ref{p005} shows that if $P^n$ exists, it is a second-order descending chain.
\end{proof}
\section{Related Work and Open Problems}
\label{OpenProblems}
We first briefly situate the results of the present paper among related spanning structures.

\paragraph{Factor graphs on PPP}
Ferrari, Landim and Thorisson~\cite{Hermann} gave a deterministic (factor) construction on a $d$-dimensional stationary PPP producing a loopless graph, a tree in dimensions $d=2,3$, with one end. Holroyd and Peres~\cite{Matching} further showed that in any dimension a factor of the PPP can be a one-ended tree. A \emph{factor} here is a graph whose vertex set is the support of the point process and whose edge set is a deterministic, isomorphism-invariant function of the configuration. In this sense, the $\mathrm{CHN}^2$ EFF on the PPP is also a factor graph (constructed level by level), and it is a one-ended forest.
Holroyd and Peres~\cite{Matching} showed that, in any dimension, the PPP admits a one-ended tree factor. Their construction is also based on a hierarchical sequence of finite partitions, although it does not arise from an explicit clustering algorithm. While their hierarchy is conceptually close to ours, their argument is analytical and does not prescribe an explicit algorithm for forming the components at each level.
Timar~\cite{timar2009} extended existence of tree factors beyond PPP to general stationary non-equidistant point processes.

\paragraph{Random thinning and hierarchical heads}
A thinning-based hierarchical classifier was proposed in~\cite{zuyev_tchoumatchenko2001}, where points are assigned random marks and attached to higher-level heads. Unlike that model, \(\mathrm{CHN}^2\) is an intrinsic deterministic factor of the underlying point process.

We conclude with a list of questions motivated by the $\mathrm{CHN}^2$ point-shift and its EFF.

\paragraph{Tree conjecture on PPP} Is the $\mathrm{CHN}^2$ EFF a tree (i.e., acyclic and connected componentwise) on the PPP in every dimension? Current results establish one-endedness of components; ruling out cycles at all scales remains open.

\paragraph{Point-map probability for $\mathrm{CHN}^2$} 
  For a point-shift $f$ acting on configurations, consider the semigroup of translations induced by $-f$ on probability measures supported on configurations with a point at the origin. The \emph{$f$-probability} (point-map probability) is the limit of the orbit of this action applied to the Palm distribution, when it exists (see, e.g., \cite{Pointshift}). Does the point-map probability exist for the $\mathrm{CHN}^2$ point-shift on the PPP? If so, can it be computed explicitly?

\paragraph{Alternative inter-cluster distances}
  The current $\mathrm{CHN}^2$ hierarchy uses the clustroid distance in its merging rule. Analyze the variants obtained by replacing this distance with, for example, Hausdorff distance or single-linkage distance between clusters. For these variants, determine:
  (i) finiteness of connected components at each hierarchical level;
  (ii) existence and identification of the local weak limit under the Palm probability;
   (iii) foil classification (e.g., $\mathcal{F/F}$ at finite levels and $\mathcal{I/I}$ in the limit) and number of ends of components.

\paragraph{Beyond PPP}
  On stationary Cox or, more generally, stationary non-equidistant point processes (cf.~\cite{timar2009}), to what extent do the PPP results persist? \\

\noindent
\textbf{Acknowledgements}
This work was supported by the ERC NEMO grant, under the European Union's Horizon 2020 research and innovation programme,
grant agreement number 788851 to INRIA.

\bibliographystyle{elsarticle-num}

\bibliography{references}
\end{document}